
\documentclass[10pt,reqno]{amsart}
\usepackage{graphicx}
\usepackage{amsfonts}
\usepackage{amssymb}
\usepackage{amsmath}
\usepackage{amsxtra}
\usepackage{latexsym}
\usepackage{epstopdf}
\usepackage{mathrsfs}
\usepackage{esint}
\usepackage{subfigure}
\usepackage{enumitem}

\newtheorem{theorem}{Theorem}[section]
\newtheorem{lemma}[theorem]{Lemma}

\newtheorem{corollary}[theorem]{Corollary}
\newtheorem{proposition}[theorem]{Proposition}

\theoremstyle{definition}
\newtheorem{definition}[theorem]{Definition}
\newtheorem{algorithm}{Algorithm}
\newtheorem{example}[theorem]{Example}

\theoremstyle{remark}
\newtheorem{remark}[theorem]{Remark}

\numberwithin{equation}{section}

\topmargin  -1pt

\begin{document}

\title[]
{}

\title[On growth error bound conditions]
{On growth error bound conditions with an application to heavy ball method}

\author{Qinian Jin}
\address{Mathematical Sciences Institute, Australian National
University, Canberra, ACT 2601, Australia}
\email{qinian.jin@anu.edu.au} \curraddr{}



\keywords{Growth error bound condition, Kurdyka-{\L}ojasiewicz condition, o-minimal structure, definable functions, the heavy ball method}

\begin{abstract}
In this paper, we investigate the growth error bound condition. By using the proximal point algorithm, 
we first provide a more accessible and elementary proof of the fact that Kurdyka-{\L}ojasiewicz 
conditions imply growth error bound conditions for convex functions which has been established 
before via a subgradient flow. We then extend the result for nonconvex functions. Furthermore we show 
that every definable function in an o-minimal structure must satisfy a growth error bound condition.
Finally, as an application, we consider the heavy ball method for solving convex optimization problems 
and propose an adaptive strategy for selecting the momentum coefficient. Under growth error bound 
conditions, we derive convergence rates of the proposed method. A numerical experiment is conducted 
to demonstrate its acceleration effect over the gradient method. 
\end{abstract}


\def\d{\delta}
\def\E{\mathbb{E}}
\def\P{\mathbb{P}}
\def\X{\mathcal{X}}
\def\A{\mathcal{A}}
\def\Y{\mathcal{Y}}
\def\l{\langle}
\def\r{\rangle}
\def\by{\overline{y}^{(n)}}
\def\bY{\overline{y}_n}
\def\la{\lambda}
\def\EE{{\mathbb E}}
\def\RR{{\mathbb R}}
\def\a{\alpha}
\def\l{\langle}
\def\r{\rangle}
\def\p{\partial}
\def\ep{\varepsilon}
\def\O{\mathcal O}
\def\M{\mathcal M}
\def\prox{\mbox{prox}}

%
%


\maketitle

\section{\bf Introduction}

Many algorithms have been developed to solve minimization problems of the form 
\begin{align}\label{mp}
\min_{x\in X} f(x),
\end{align}
where $X$ is a Hilbert space and $f: X \to (-\infty, \infty]$ is a proper lower semi-continuous function. 
Various favorable error bound conditions have been proposed on $f$ to guarantee desired convergence rates 
for many specific algorithms (\cite{AB2009,ABS2013,DL2018,LT1993,NNG2019}). These error bound conditions 
constitute important mathematical concepts with diverse applications in optimization, analysis, and 
variational inequalities. They play a crucial role in establishing convergence properties of algorithms, 
characterizing critical points, and understanding the behavior of functions in various settings.

Let $S:= \arg\min_{x\in X} f(x)$ denote the set of solutions of (\ref{mp}) which is assumed to be non-empty. 
One of the prominent error bound conditions is the growth error bound condition. A proper lower 
semi-continuous function $f: X \to (-\infty, \infty]$ is said to satisfy a growth error bound 
condition at a point $\bar x \in S$ if 
\begin{align}\label{GEB}
d(x, S) \le \varphi(f(x) - f(\bar x)), \quad \forall x \in B_r(\bar x) \cap \{f< f(\bar x) + \eta\}
\end{align}
for some $r>0$, $\eta\in (0, \infty]$ and a desingularizing function $\varphi \in {\mathcal K}(0, \eta)$, 
where $B_r(\bar x)$ denotes the open ball of radius $r$ with center $\bar x$,
$$
{\mathcal K}(0, \eta) 
:= \{\varphi \in C[0, \eta) \cap C^1(0, \eta): \varphi(0) =0, \varphi \mbox{ is concave},  
\varphi'(s)>0\},
$$
and $d(x, S)$ denotes the distance from $x$ to $S$, i.e. 
$$
d(x, S) := \inf_{z\in S} \|x - z\|.
$$
When $\varphi(t) = \sqrt{\gamma t}$ for some constant $\gamma >0$, (\ref{GEB}) becomes 
$$
d^2(x, S) \le \gamma(f(x) - f(\bar x)), \quad \forall x\in B_r(\bar x)\cap \{f< f(\bar x) + \eta\}
$$
which is known as the quadratic growth condition. 

The equivalence of the growth error bound condition to other error bound conditions has been explored for 
convex functions, see \cite{AG2014,BNPS2017,DL2018} for instance. In particular, the relation between 
the growth error bound condition and the Kurdyka-{\L}ojasiewicz condition has been investigated in 
\cite{BNPS2017}. For a proper, lower semi-continuous, convex function $f: X \to (-\infty, \infty]$ satisfying 
the growth error bound condition (\ref{GEB}), it has been proved in \cite{BNPS2017} that if there exists 
$c>0$ such that 
\begin{align}\label{phi}
s \varphi'(s) \ge c \varphi(s) \quad  \mbox{for all } s \in (0, \eta), 
\end{align}
then $f$ must satisfy at $\bar x$ the condition
\begin{align}\label{KL0}
\varphi'(f(x) - f(\bar x)) d(0, \p f(x)) \ge 1, \quad 
\forall x \in B_r(\bar x) \cap \{f(\bar x) < f < f(\bar x) + \eta\}
\end{align}
and, moreover, the converse holds without requiring (\ref{phi}), where $\p f$ denotes the convex 
subdifferential of $f$. The condition (\ref{KL0}) is known as the Kurdyka-{\L}ojasiewicz (KL) condition 
(\cite{BDL2006,BDLS2007,K1998,L1963,L1965})
which plays a remarkable role in recent development on first order optimization methods. When $f$ is 
continuously differetiable, the KL condition with $\varphi(t) = \sqrt{\gamma t}$ for some constant 
$\gamma >0$ is also called the Polyak-{\L}ojasiewicz condition (\cite{L1963,L1965,P1963}). 

The proof that (\ref{GEB}) implies (\ref{KL0}) is somewhat straightforward. However, establishing the reverse 
implication is highly nontrivial. The proof provided in \cite{BNPS2017} is based on the subgradient flow
\begin{align*}
\left\{\begin{array}{lll}
\dot{x}(t) \in  - \p f(x(t)) \quad \mbox{for almost all } t \in (0, \infty), \\
x(0) = x 
\end{array} \right.
\end{align*}
generated by $\p f$ and hence the argument relies heavily on the theory of differential inclusion which 
involves a lot of machinery. In this paper, by using the proximal point algorithm we will first provide an 
alternative argument to show that the KL condition (\ref{KL0}) implies the growth error bound condition 
(\ref{GEB}) for proper, lower semi-continuous, convex functions, offering a more accessible and elementary 
proof. Furthermore, we will extend our argument to demonstrate that, for proper lower semi-continuous 
nonconvex functions, (\ref{KL0}) still implies (\ref{GEB}) on a possibly smaller set. As a direct 
consequence, we may use the result in \cite{BDLS2007}, which states that every definable function in an 
o-minimal structure must satisfy the KL condition, to conclude that every definable function satisfies the 
growth error bound condition (\ref{GEB}). Nevertheless, we will provide a direct derivation of this fact 
because the proof of KL property of definable functions in \cite{BDLS2007} involves a lot of machinery, 
including the Whitney stratification and the projection formula related to the stratum. 

As an application, we will consider the heavy ball method for solving convex optimization problems. This 
method can be viewed as a modification of the gradient method by adding a momentum term. The choice of the 
momentum coefficient crucially affects the performance of the method. We will develop an adaptive strategy 
for selecting the momentum coefficient and, under growth error bound conditions, we will derive the 
convergence rates of our proposed method. We will also conduct a numerical experiment related to the computed 
tomography which demonstrates that our method has obvious acceleration effect over the gradient method and it 
can produce computational results comparable to those obtained by Nesterov's accelerated gradient method and 
the ALR-HB method, i.e. the heavy ball method with adaptive learning rate in \cite{WJZ2023}, with even better 
accuracy as the iterations proceed.

\section{\bf KL condition implies growth error bound condition}

We start by proving that the KL condition implies the growth error bound condition for proper lower 
semi-continuous convex functions. 

\begin{theorem}\label{thm:KLGC}
Let $f: X \to (-\infty, \infty]$ be a proper, lower semi-continuous, convex function 
with $S:=\arg\min_{x\in X} f(x) \ne \emptyset$. Let $\bar x \in S$ and $f_*:= f(\bar x)$. 
If there exist $r>0$, $\eta>0$ and a desingularizing function 
$\varphi\in {\mathcal K}(0, \eta)$ such that 
$$
\varphi'(f(x) - f_*)  d(0, \p f(x)) \ge 1
$$
for all $x \in B_r(\bar x) \cap \{f_* < f < f_* + \eta\}$, then 
$$
d(x, S) \le 2 \varphi(f(x) - f_*)
$$
for all $x \in B_r(\bar x)\cap \{f_* \le f < f_* + \eta\}$. 
\end{theorem}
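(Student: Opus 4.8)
The plan is to use the proximal point algorithm (as the author announces) to connect the KL condition (which controls subgradients) to the growth error bound (which controls distance to the solution set $S$). The guiding principle is that the KL inequality $\varphi'(f(x)-f_*)\,d(0,\partial f(x))\ge 1$ says the rescaled function $\varphi\circ(f-f_*)$ decreases at a controlled rate along a descent process, so integrating that decrease along a trajectory that drives $f$ down to $f_*$ should bound the total distance traveled — and hence $d(x,S)$ — by the change in $\varphi(f-f_*)$.

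Let me think about how I'd actually prove it.

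**Setting up the proximal sequence.** Fix $x_0 = x \in B_r(\bar x)\cap\{f_* \le f < f_*+\eta\}$ and define the proximal point iteration $x_{k+1} = \mathrm{prox}_{\lambda_k f}(x_k) = \arg\min_z \{f(z) + \frac{1}{2\lambda_k}\|z - x_k\|^2\}$ for suitable step sizes $\lambda_k>0$. For convex lsc $f$ this is well-defined and single-valued. The two facts I want are: the descent estimate $f(x_{k+1}) + \frac{1}{2\lambda_k}\|x_{k+1}-x_k\|^2 \le f(x_k)$, and the optimality condition $g_k := (x_k - x_{k+1})/\lambda_k \in \partial f(x_{k+1})$, so that $\|x_{k+1}-x_k\| = \lambda_k\|g_k\| \ge \lambda_k\, d(0,\partial f(x_{k+1}))$.

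**The telescoping estimate.** The heart of the matter is to bound the step length $\|x_{k+1}-x_k\|$ by the drop in $\varphi$ evaluated along the iterates. Writing $\rho_k := \varphi(f(x_k)-f_*)$, I want an inequality of the form $\|x_{k+1}-x_k\| \le C(\rho_k - \rho_{k+1})$. Using concavity of $\varphi$, $\rho_k - \rho_{k+1} \ge \varphi'(f(x_k)-f_*)\,(f(x_k)-f(x_{k+1}))$, and combining the descent estimate $f(x_k)-f(x_{k+1}) \ge \frac{1}{2\lambda_k}\|x_{k+1}-x_k\|^2$ with the subgradient bound via the KL inequality $\varphi'(f(x_{k+1})-f_*)\,\|g_k\| \ge 1$ should close the loop. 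Summing $\|x_{k+1}-x_k\| \le C(\rho_k-\rho_{k+1})$ telescopes to $\sum_k \|x_{k+1}-x_k\| \le C\,\rho_0 = C\,\varphi(f(x)-f_*)$, which bounds the total path length, hence the limit $\bar z = \lim x_k$ exists and $\|x - \bar z\| \le C\varphi(f(x)-f_*)$. I then need to show $\bar z \in S$ (i.e. $f(\bar z)=f_*$), so that $d(x,S) \le \|x-\bar z\| \le C\varphi(f(x)-f_*)$, and to pin down the constant $C=2$.

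**Where the difficulty lies.** The main obstacle is twofold. First, I must keep the iterates inside the region $B_r(\bar x)\cap\{f_*<f<f_*+\eta\}$ where the KL hypothesis is available; the path-length bound $\sum\|x_{k+1}-x_k\|\le C\varphi(f(x)-f_*)$ is exactly what lets me argue the iterates never escape $B_r(\bar x)$ (a bootstrapping/induction argument, controlling the radius by the accumulated step lengths), but this requires care because $\varphi(f(x)-f_*)$ must itself be small enough relative to $r$ — likely handled by shrinking the neighborhood. Second, exchanging the discrete-index telescoping argument against the mismatch between $\varphi'$ evaluated at $f(x_k)$ versus $f(x_{k+1})$ (concavity gives me one, the KL inequality gives the other) is delicate and is precisely where the sharp constant $2$ rather than $1$ should emerge. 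I also need to verify that $f(x_k)\downarrow f_*$ rather than to some larger critical value, using the KL inequality to rule out stalling above $f_*$. Finally I'd pass to the continuous case $f(x)=f_*+\eta$ boundary or $f(x)=f_*$ by a limiting/closure argument to extend the bound to all of $\{f_*\le f<f_*+\eta\}$.
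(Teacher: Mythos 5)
Your plan follows the same route as the paper (proximal point iteration started at $x$, KL-driven telescoping of step lengths, limit point in $S$), but two of the difficulties you flag are resolved in ways you have not identified, and one of your proposed fixes would actually weaken the conclusion.

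First, the containment of the iterates in $B_r(\bar x)$ requires no bootstrapping and no shrinking of the neighborhood in the convex case: the proximal point operator is (firmly) nonexpansive and fixes every minimizer, so $\|x_{k+1}-\bar x\|\le\|x_k-\bar x\|<r$ for all $k$ (Fej\'{e}r monotonicity), and likewise $f(x_k)$ decreases and converges to $f_*$ by standard proximal point theory, with no appeal to KL to rule out stalling. If you shrink the neighborhood as you suggest, you prove the bound only on a smaller ball, which is strictly weaker than the stated theorem (that concession is made only in the nonconvex extension, Theorem \ref{thm2}). Second, the per-step inequality $\|x_{k+1}-x_k\|\le C(\rho_k-\rho_{k+1})$ you aim for is not obtainable: concavity of $\varphi$, the descent estimate, and the KL inequality applied to the subgradient $(x_k-x_{k-1})/\tau\in\p f(x_k)$ combine to give only $\|x_{k+1}-x_k\|^2\le 2\|x_k-x_{k-1}\|\,(\rho_k-\rho_{k+1})$, hence by AM--GM the mixed inequality $2\|x_{k+1}-x_k\|\le\|x_k-x_{k-1}\|+2(\rho_k-\rho_{k+1})$. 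Summing this still telescopes, but it leaves a residual term $2\|x_1-x_0\|$ in the path-length bound, so for fixed $\tau$ you only get $d(x,S)\le 2\|x_1-x_0\|+2\varphi(f(x)-f_*)$. The missing idea that produces the clean constant $2$ is to observe $\|x_1-x_0\|\le\sqrt{2\tau(f(x)-f_*)}$ and then let the proximal parameter $\tau\to 0$ at the very end; without this device your argument does not reach the stated bound. (A minor further point: in a general Hilbert space the iterates converge only weakly a priori, so one either uses weak lower semicontinuity of the norm, as the paper does, or first establishes the finite-length property to get strong convergence.)
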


\begin{proof}
Given any $x \in B_r(\bar x)\cap \{f_* < f< f_* + \eta\}$ and consider the sequence $\{x_k\}$ defined by the 
proximal point algorithm 
$$
x_k = \arg\min_{z\in X} \left\{ f(z) + \frac{1}{2\tau} \|z- x_{k-1}\|^2\right\}, 
\quad k = 1, 2, \cdots
$$
with $x_0 : = x$, where $\tau>0$ is a constant. Because $f$ is convex and lower 
semi-continuous, we have the following results (see \cite[Theorem 28.1]{BC2011} 
for instance): 

\begin{enumerate}[leftmargin = 0.9cm]
\item[$\bullet$] $\|x_{k+1} - \bar x\| \le \|x_k - \bar x\|$ for all $k \ge 0$. 
This implies $x_k \in B_r(\bar x)$ for all $k\ge 0$. 

\item[$\bullet$] $f(x_{k+1}) \le f(x_k)$ for all $k$. Consequently 
$x_k \in \{f_* \le f < f_* + \eta\}$ for all $k \ge 0$. 

\item[$\bullet$] $f(x_k) \to f_*$ and $x_k \rightharpoonup x^*$ as $k \to \infty$ for 
some $x^* \in S$. where $\rightharpoonup$ denotes the weak convergence in $X$. 
\end{enumerate}
We now show that for all integers $k \ge 1$ there holds
\begin{align}\label{KLGC.1}
2 \|x_{k+1} - x_k\| \le \|x_k - x_{k-1}\| + 2 \left[\varphi(f(x_k) - f_*) - \varphi(f(x_{k+1}) - f_*)\right].
\end{align}
If $f(x_k) = f_*$, then by the definition of $x_{k+1}$ we have 
\begin{align}\label{KLGC.2}
f(x_{k+1}) + \frac{1}{2\tau} \|x_{k+1} - x_k\|^2 \le f(x_k) = f_*
\end{align}
which implies $f(x_{k+1}) = f_*$ and $x_{k+1} = x_k$ and hence (\ref{KLGC.1}) holds trivially. So we may assume 
$f(x_k) > f_*$. By the definition of $x_k$ we have $(x_{k-1}-x_k)/\tau \in \p f(x_k)$. Thus, by the given condition 
we have  
\begin{align}\label{KLGC.3}
\varphi'(f(x_k)-f_*) \ge \frac{1}{d(0, \p f(x_k))} \ge \frac{\tau}{\|x_k - x_{k-1}\|}. 
\end{align}
It follows from the concavity of $\varphi$, (\ref{KLGC.2}) and (\ref{KLGC.3}) that 
\begin{align*}
\varphi(f(x_{k+1})-f_*) 
& \le \varphi(f(x_k)-f_*) + \varphi'(f(x_k)-f_*) (f(x_{k+1}) - f(x_k)) \\
& \le \varphi(f(x_k) - f_*) - \frac{\|x_{k+1} - x_k\|^2}{2 \|x_k - x_{k-1}\|}.
\end{align*}
Therefore 
\begin{align*}
2 \|x_{k+1} - x_k\| 
& \le \|x_k - x_{k-1}\| + \frac{\|x_{k+1} - x_k\|^2}{\|x_k - x_{k-1}\|} \\
& \le \|x_k - x_{k-1}\| + 2 \left[\varphi(f(x_k)-f_*) - \varphi(f(x_{k+1}) - f_*)\right]
\end{align*}
which shows (\ref{KLGC.1}) again. 

For any integer $l>1$, by summing (\ref{KLGC.1}) over $k$ from $1$ to $l-1$ we can obtain 
\begin{align*}
2 \sum_{k=1}^{l-1} \|x_{k+1} - x_k\| 
& \le \sum_{k=1}^{l-1} \|x_k - x_{k-1}\| + 2 \left[\varphi(f(x_1)-f_*) - \varphi(f(x_{l}) - f_*)\right] \\
& \le \sum_{k=1}^{l-1} \|x_k-x_{k-1}\| + 2\varphi(f(x_{1})-f_*) \\
& \le \sum_{k=1}^{l-1} \|x_k - x_{k-1}\| + 2 \varphi(f(x_0)-f_*).
\end{align*}
Consequently 
\begin{align}\label{KLGC.11}
\|x_{l} - x_0\| \le \sum_{k=0}^{l-1} \|x_{k+1} - x_k\| \le 2 \|x_1 - x_0\| +  2 \varphi(f(x_0)-f_*).
\end{align}
Since $x_l \rightharpoonup x^*$ as $l \to \infty$, we have 
$$
\|x^*-x\|^2 = \lim_{l\to \infty} \l x_l-x, x^*-x\r 
\le \liminf_{l\to \infty} \|x_l-x\| \|x^*-x\|
$$
which implies $\|x^*-x\| \le \liminf_{l\to \infty} \|x_l - x\|$. Therefore, by using $x^* \in S$, 
$x_0 = x$ and (\ref{KLGC.11}) we can obtain
\begin{align*}
d(x, S) \le \|x^* - x\| \le \liminf_{l\to \infty} \|x_l - x_0\| \le 2 \|x_1 - x_0\| + 2 \varphi(f(x) - f_*). 
\end{align*}
By the definition of $x_1$ we have $\|x_1 - x_0\|^2 \le 2\tau (f(x_0) - f(x_1)) 
\le 2 \tau (f(x) - f_*)$. Therefore 
\begin{align*}
d(x, S) \le 2 \sqrt{2\tau (f(x) - f_*)}  + 2 \varphi(f(x) - f_*)
\end{align*}
which is valid for all $\tau>0$. Letting $\tau \to 0$ then shows the result.  
\end{proof}

For a given convex function, we may use Theorem \ref{thm:KLGC} to derive the growth error bound of its 
Moreau envelop from the growth error bound condition of the function itself. Recall that for a proper, lower 
semi-continuous, convex function $f: X \to (-\infty, \infty]$, its Moreau envelop $M_{\la f}$, for any 
$\la>0$, is defined by 
$$
M_{\la f}(x) := \inf_{x\in X} \left\{f(z) + \frac{1}{2\la} \|z-x\|^2\right\}, \quad x \in X.
$$
It is well known (\cite{BC2011}) that $M_{\la f}$ is a continuous differentiable convex 
function with $M_{\la f}(x) \le f(x)$ for all $x \in X$,
$$
\min_{x\in X} f(x) = \min_{x\in X} M_{\la f}(x),
$$
and $f$ and $M_{\la f}$ have the same set of minimizers; moreover 
\begin{align}\label{ME}
\nabla M_{\la f}(x) = \frac{1}{\la} \left(x- \prox_{\la f}(x)\right), \quad x\in X,
\end{align}
where $\prox_{\la f}$ denotes the proximal mapping of $f$ with parameter $\la$, i.e. 
$$
\prox_{\la f}(x) := \arg\min_{x\in X} \left\{f(z) + \frac{1}{2\la} \|z-x\|^2\right\}, \quad x \in X. 
$$
By the firmly nonexpansiveness of the proximal mapping, $\nabla M_{\la f}$ is Lipschitz continuous 
with constant $1/\la$. We have the following result. 

\begin{corollary}\label{prop:GEB}
Let $f: X \to (-\infty, \infty]$ be a proper, lower semi-continuous, convex function with 
$S:=\arg\min_{x\in X} f(x) \ne \emptyset$. If $f$ satisfies a growth error bound condition at a point 
$\bar x\in S$, i.e. there exist $C>0$, $r>0$, $\eta \in (0, \infty]$ and $\a \in (0, 1]$ such that 
\begin{align}\label{GEB.1}
d(x, S) \le C \left(f(x) - f(\bar x)\right)^\a
\end{align}
for all $x\in B_r(\bar x) \cap \{f(\bar x) \le f < f(\bar x) + \eta\}$, then there exists a 
constant $\tilde C>0$ such that 
$$
d(x, S) \le \tilde C \left(M_{\la f}(x) - M_{\la f}(\bar x)\right)^{\min\{\a, 1/2\}}
$$
for all $x \in B_r(\bar x) \cap \{M_{\la f}(\bar x) \le M_{\la f} < M_{\la f}(\bar x) + \eta\}$. 
\end{corollary}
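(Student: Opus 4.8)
The plan is to argue directly from the proximal interpretation of the Moreau envelope, which avoids first re-deriving a Kurdyka--{\L}ojasiewicz inequality for $M_{\lambda f}$ and then feeding it into Theorem \ref{thm:KLGC}. Write $f_* := f(\bar x)$ and note that, since $\bar x$ minimizes both $f$ and $M_{\lambda f}$, we have $M_{\lambda f}(\bar x) = f_*$. Fix $x$ in the admissible set $B_r(\bar x)\cap\{f_* \le M_{\lambda f} < f_* + \eta\}$, put $p := \prox_{\lambda f}(x)$ and $m := M_{\lambda f}(x) - f_* \in [0, \eta)$. The engine of the proof is the defining identity
$$
M_{\lambda f}(x) = f(p) + \frac{1}{2\lambda}\|x - p\|^2,
$$
in which both $f(p) - f_* \ge 0$ and $\frac{1}{2\lambda}\|x - p\|^2 \ge 0$; reading off each summand gives
$$
f(p) - f_* \le m \quad \mbox{and} \quad \|x - p\| \le \sqrt{2\lambda\, m}.
$$

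Next I would verify that $p$ lies in the region $B_r(\bar x)\cap\{f_* \le f < f_*+\eta\}$ on which the hypothesis (\ref{GEB.1}) is valid. The sublevel membership is immediate from $f_* \le f(p) \le f_* + m < f_* + \eta$. For the ball membership I would use that $\prox_{\lambda f}$ is nonexpansive and fixes the minimizer, $\prox_{\lambda f}(\bar x) = \bar x$, whence
$$
\|p - \bar x\| = \|\prox_{\lambda f}(x) - \prox_{\lambda f}(\bar x)\| \le \|x - \bar x\| < r .
$$
Applying (\ref{GEB.1}) at $p$ then yields $d(p, S) \le C(f(p) - f_*)^\alpha \le C\, m^\alpha$.

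The conclusion now follows by the triangle inequality through $p$ together with the two displayed bounds:
$$
d(x, S) \le \|x - p\| + d(p, S) \le \sqrt{2\lambda}\, m^{1/2} + C\, m^\alpha .
$$
It remains to collapse the two powers into a single $m^{\min\{\alpha, 1/2\}}$. Because $\nabla M_{\lambda f}$ is $1/\lambda$-Lipschitz and vanishes at $\bar x$ (as $\nabla M_{\lambda f}(\bar x) = \frac{1}{\lambda}(\bar x - \prox_{\lambda f}(\bar x)) = 0$), one has $m \le \frac{1}{2\lambda}\|x - \bar x\|^2 < r^2/(2\lambda)$ throughout $B_r(\bar x)$, so $m$ is bounded independently of $\eta$. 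Consequently each of $m^{1/2}$ and $m^\alpha$ is at most a constant multiple of $m^{\min\{\alpha, 1/2\}}$, and the resulting multiplier is the claimed constant $\tilde C$.

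The only point requiring care is the ball membership $p \in B_r(\bar x)$: a naive estimate $\|p - \bar x\| \le \|p - x\| + \|x - \bar x\|$ would only land $p$ in a slightly enlarged ball, so the clean resolution is the (firm) nonexpansiveness of the proximal map combined with $\bar x$ being its fixed point; everything else is bookkeeping. I would also remark that the exponent $\min\{\alpha, 1/2\}$ is intrinsic rather than an artifact of the estimates: the term $m^{1/2}$ originates from the quadratic proximal penalty and caps the attainable growth exponent at $1/2$, so that passing to the Moreau envelope lowers a growth exponent $\alpha > 1/2$ down to $1/2$ while leaving any $\alpha \le 1/2$ unchanged.
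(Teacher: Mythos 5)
Your proof is correct, and it takes a genuinely different and more elementary route than the paper. The paper converts the growth bound (\ref{GEB.1}) into a KL inequality for $f$ via \cite[Theorem 5 (ii)]{BNPS2017}, uses the decomposition $M_{\la f}(x) = f(\prox_{\la f}(x)) + \frac{1}{2\la}\|\prox_{\la f}(x)-x\|^2$ together with $\nabla M_{\la f}(x)\in\p f(\prox_{\la f}(x))$ to derive a KL inequality for $M_{\la f}$ with exponent $\min\{\frac{1}{1-\a},2\}$, and then invokes Theorem \ref{thm:KLGC} to convert that back into a growth bound. You instead read the two nonnegative summands of the same decomposition directly, estimate $d(\prox_{\la f}(x),S)$ by the hypothesis (after the same nonexpansiveness argument the paper uses to keep $\prox_{\la f}(x)$ inside $B_r(\bar x)$), and finish with the triangle inequality $d(x,S)\le \|x-\prox_{\la f}(x)\|+d(\prox_{\la f}(x),S)$; the uniform bound $m< r^2/(2\la)$ (which also follows at once from $M_{\la f}(x)\le f(\bar x)+\frac{1}{2\la}\|x-\bar x\|^2$) correctly handles the case $\eta=\infty$ when collapsing the two powers. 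Your argument is shorter, self-contained, and avoids both the cited converse result and Theorem \ref{thm:KLGC}; what it does not deliver is the intermediate KL inequality (\ref{KLGC.17}) for the Moreau envelope, which the paper values in its own right --- the subsequent remark uses precisely that intermediate step to sharpen the KL-exponent calculus of \cite{LP2018,YLP2022}. So the paper's longer detour buys a byproduct that your direct route skips.
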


\begin{proof}
By using (\ref{GEB.1}) and \cite[Theorem 5 (ii)]{BNPS2017} we first have 
\begin{align}\label{KL.131}
\left(f(x) - f(\bar x)\right)^{1-\a} \le \frac{C}{\a} d(0, \p f(x))
\end{align}
for all $x \in B_r(\bar x) \cap \{f(\bar x) \le f < f(\bar x) + \eta\}$. We next show that 
$M_{\la f}$ satisfies the KL condition at $\bar x$ with exponent $\min\{1-\a, 1/2\}$. 
Let $x \in B_r(\bar x)\cap \{M_{\la f}(\bar x) \le M_{\la f}< M_{\la f}(\bar x)+\eta\}$ 
be any point. By noting that 
$$
M_{\la f}(x) = f(\prox_{\la f}(x)) + \frac{1}{2\la} \|\prox_{\la f}(x) - x\|^2
$$
and (\ref{ME}), we have $f(\bar x) \le f(\prox_{\la f}(x)) \le M_{\la f}(x) < f(\bar x) + \eta$ and 
\begin{align*}
M_{\la f}(x) - M_{\la f}(\bar x) 
= f(\prox_{\la f}(x)) - f(\bar x) + \frac{\la}{2} \|\nabla M_{\la f}(x)\|^2.
\end{align*}
By the nonexpansiveness of the proximal mapping, we have 
$$
\|\prox_{\la f}(x) -\bar x\| = \|\prox_{\la f}(x) - \prox_{\la f}(\bar x)\| \le \|x-\bar x\| <r. 
$$
Thus we may use (\ref{KL.131}) to conclude 
\begin{align*}
M_{\la f}(x) - M_{\la f}(\bar x) 
\le \left(\frac{C}{\a}\right)^{\frac{1}{1-\a}} d(0, \p f(\prox_{\la f}(x)))^{\frac{1}{1-\a}}  
+ \frac{\la}{2} \|\nabla M_{\la f}(x)\|^2.
\end{align*}
By the definition of $\prox_{\la f}(x)$ we have 
$$
\nabla M_{\la f}(x) = \frac{1}{\la} (x-\prox_{\la f}(x)) \in \p f(\prox_{\la f}(x)). 
$$
and thus $d(0, \p f(\prox_{\la f}(x))) \le \|\nabla M_{\la f}(x)\|$. Therefore 
\begin{align*}
M_{\la f}(x) - M_{\la f}(\bar x) 
\le \left(\frac{C}{\a}\right)^{\frac{1}{1-\a}} \|\nabla M_{\la f}(x)\|^{\frac{1}{1-\a}}  
+ \frac{\la}{2} \|\nabla M_{\la f}(x)\|^2
\end{align*}
Note that 
$$
\|\nabla M_{\la f}(x)\| = \|\nabla M_{\la f}(x) - \nabla M_{\la f}(\bar x)\| 
\le \frac{1}{\la} \|x - \bar x\| <\frac{r}{\la}. 
$$
Thus, there must exist a constant $C_1>0$ such that 
\begin{align}\label{KLGC.17}
M_{\la f}(x) - M_{\la f}(\bar x) 
\le C_1 \|\nabla M_{\la f}(x)\|^{\min\{\frac{1}{1-\a}, 2\}}  
\end{align}
for all $x \in B_r(\bar x) \cap \{M_{\la f}(\bar x) \le M_{\la f} \le M_{\la f}(\bar x) + \eta\}$. 
Finally, by using (\ref{KLGC.17}) and Theorem \ref{thm:KLGC} we can complete the proof immediately. 
\end{proof}

\begin{remark}
Let $f:X \to (-\infty, \infty]$ be a proper, lower semi-continuous, convex function that 
is a KL function with exponent $\gamma \in (0, 1]$. It has been proved in \cite[Theorem 3.4]{LP2018}
that if $\gamma \in (0, 2/3)$ then its Moreau envelop $M_{\la f}$ for any $\la>0$ is a KL function 
with exponent $\max\{1/2, \gamma/(2-2\gamma)\}$. In the proof of Corrolary \ref{prop:GEB}, we have 
actually obtained the tighter result that $M_{\la f}$ is a KL function with exponent $\max\{1/2, \gamma\}$
for all $\gamma \in (0, 1]$. This tight result has been proved in \cite{YLP2022} recently as a special case 
of a general result with an involved argument, see Theorem 5.2 and Remark 5.1 in \cite{YLP2022}. 
Our derivation is simple and straightforward. 
\end{remark}

Next we will consider extending Theorem \ref{thm:KLGC} to nonconvex functions. 
We need to replace the convex subdifferential by the Fr\'{e}chet subdifferential. 
Recall that for a proper function $f: X \to (-\infty, \infty]$ its Fr\'{e}chet 
subdifferential at a point $x\in \mbox{dom}(f):=\{x\in X: f(x)<\infty\}$ is the set 
$\hat \p f(x)$ consisting of all vectors $\xi\in X$ satisfying 
$$
\liminf_{y\ne x, y\to x} \frac{f(y) - f(x) - \l \xi, y-x\r}{\|y - x\|} \ge 0.
$$
When $x \notin \mbox{dom}(f)$, we set $\hat{\p} f(x) = \emptyset$. By definition it 
is straightforward to show that
\begin{align}\label{LS.1}
\hat \p (f+g)(x) = \nabla f(x) + \hat \p g(x), \quad \forall x \in X
\end{align}
if $f: X \to \RR$ is continuously differentiable and $g: X \to (-\infty, \infty]$ is 
proper. 

In the following result we will extend Theorem \ref{thm:KLGC} to  nonconvex functions by 
showing that if $f$ satisfies a KL condition at a point $\bar x \in \mbox{dom}(f)$ then 
$f$ satisfies a growth error bound condition at $\bar x$ on a possibly smaller set. 

\begin{theorem}\label{thm2}
Let $f: X \to (-\infty, \infty]$ be a proper, weakly lower semi-continuous function and 
$\bar x \in \emph{dom}(f)$. If there exist $r>0$, $\eta>0$ and a desingularizing 
function $\varphi\in {\mathcal K}(0, \eta)$ such that 
\begin{align}\label{KL}
\varphi'(f(x) - f(\bar x))  d(0, \hat \p f(x)) \ge 1
\end{align}
for all $x \in B_r(\bar x) \cap \{f(\bar x) < f < f(\bar x) + \eta\}$, then there exist
$0<\tilde r\le r$ and $0<\tilde \eta \le \eta$ such that 
$$
d(x, S) \le 2 \varphi(f(x) - f(\bar x))
$$
for all $x \in B_{\tilde r}(\bar x)\cap \{f(\bar x) \le  f < f(\bar x) + \tilde \eta \}$, 
where $S:=\{x \in X: f(x) \le f(\bar x)\}$.
\end{theorem}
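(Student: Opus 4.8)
The plan is to mirror the proof of Theorem \ref{thm:KLGC}, running the proximal point algorithm $x_k = \arg\min_{z}\{f(z) + \frac{1}{2\tau}\|z - x_{k-1}\|^2\}$ from $x_0 = x$, but with the convex subdifferential replaced by the Fr\'echet subdifferential $\hat\p f$. Two properties that came for free in the convex case must now be reconstructed. First, well-posedness and monotonicity of the iterates: since $f$ is weakly lower semi-continuous, it attains its infimum on any closed ball $\bar B_R(\bar x)$ (which is weakly compact), so $f$ is bounded below there by some $m$, and restricting the regularized subproblem to $\bar B_R(\bar x)$ (with $R$ slightly larger than $r$) yields a minimizer by the direct method. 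For small $\tau$ this minimizer is interior, so by (\ref{LS.1}) the optimality condition reads $(x_{k-1} - x_k)/\tau \in \hat\p f(x_k)$, and comparing with the feasible point $z = x_{k-1}$ gives the descent estimates $f(x_{k+1}) \le f(x_k)$ and $\|x_{k+1} - x_k\|^2 \le 2\tau(f(x_k) - f(x_{k+1}))$. Second, and this is the crux, one must prove that the iterates remain in $B_r(\bar x)$, since the Fej\'er monotonicity $\|x_{k+1}-\bar x\| \le \|x_k - \bar x\|$ used in Theorem \ref{thm:KLGC} is unavailable for nonconvex $f$.

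With the bound $d(0, \hat\p f(x_k)) \le \|x_k - x_{k-1}\|/\tau$ in hand, the key inequality (\ref{KLGC.1}) is re-derived verbatim from the concavity of $\varphi$ and the KL condition (\ref{KL}), valid at any step with $x_k \in B_r(\bar x)$, $f(x_k) > f(\bar x)$ and $f(x_{k+1}) \ge f(\bar x)$. Summing telescopically reproduces the partial-length bound $\|x_l - x_0\| \le 2\|x_1 - x_0\| + 2\varphi(f(x_0) - f(\bar x))$ of (\ref{KLGC.11}). The confinement is then obtained by induction, which is where I expect the main obstacle to lie. I would fix $\tilde r \le r$ and $\tilde\eta < \eta$ small enough that $\tilde r + 2\varphi(\tilde\eta) < r$ (possible since $\varphi(\tilde\eta)\to 0$ as $\tilde\eta \to 0$, leaving room for a later $O(\sqrt\tau)$ correction), and restrict $\tau$ to be small. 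Assuming $x_0, \dots, x_k \in B_r(\bar x)$ with $f(x_j) > f(\bar x)$ for $j \le k$, the inequality (\ref{KLGC.1}) applies at the indices $1, \dots, k$, and the telescoped bound forces $\|x_{k+1} - x_0\| \le 2\varphi(\tilde\eta) + O(\sqrt\tau)$, whence $\|x_{k+1}-\bar x\| \le \tilde r + 2\varphi(\tilde\eta) + O(\sqrt\tau) < r$, so that $x_{k+1} \in B_r(\bar x)$ and the induction closes.

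It remains to produce a limit point in $S = \{f \le f(\bar x)\}$ and pass to the bound, and here the convergence-to-the-minimum input of the convex theory must be replaced. I would split into two cases. If $f(x_N) \le f(\bar x)$ for some first index $N$, then $x_N \in S$ and $d(x_0, S) \le \|x_N - x_0\|$ is controlled by the partial length, the single transition step $\|x_N - x_{N-1}\| \le \sqrt{2\tau(f(x_{N-1}) - m)}$ being $O(\sqrt\tau)$. Otherwise $f(x_k) > f(\bar x)$ for all $k$; the finite total length makes $\{x_k\}$ norm-Cauchy, so $x_k \to x^*$ strongly. Since $\|x_k - x_{k-1}\|\to 0$, the KL condition gives $\varphi'(f(x_k) - f(\bar x)) \ge \tau/\|x_k - x_{k-1}\| \to \infty$, which is incompatible with $f(x_k)\downarrow L > f(\bar x)$ because $\varphi'$ is continuous at $L - f(\bar x) \in (0,\eta)$; hence $f(x_k)\to f(\bar x)$, and weak lower semi-continuity yields $f(x^*) \le \liminf_k f(x_k) = f(\bar x)$, i.e. $x^* \in S$. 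In either case one arrives at $d(x_0, S) \le 2\varphi(f(x_0) - f(\bar x)) + O(\sqrt\tau)$, and letting $\tau \to 0$ gives the claimed estimate on $B_{\tilde r}(\bar x) \cap \{f(\bar x) \le f < f(\bar x) + \tilde\eta\}$, the case $f(x_0) = f(\bar x)$ being trivial since then $x_0 \in S$.
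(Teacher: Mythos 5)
Your proposal is correct and follows essentially the same route as the paper: the proximal point algorithm started at $x$, the KL-driven telescoping length bound, an induction keeping the iterates in $B_r(\bar x)$ under the smallness conditions $\tilde r + 2\varphi(\tilde\eta) < r$ and $\tau$ small, the contradiction via $\varphi'(f(x_k)-f(\bar x))\to\infty$ to force $f(x_k)\to f(\bar x)$, and the final passage $\tau\to 0$. The only (immaterial) differences are that the paper first replaces $f$ by $\max\{f - f(\bar x), 0\}$ so that $\bar x$ becomes a global minimizer — which gives well-posedness of the unconstrained subproblems and removes the need for your explicit case split at the first index with $f(x_N)\le f(\bar x)$ — whereas you handle that case directly and obtain the subproblem minimizers on enlarged balls with an interiority argument.
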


\begin{proof}
By considering the function $\tilde f(x) := \max\{f(x) - f(\bar x), 0\}$, which is still proper 
and weakly lower semi-continuous, and noting that 
$$
\bar x \in S = \arg\min_{x\in X} \tilde f(x)  
$$
and $\hat \p \tilde f(x) = \hat \p f(x)$ for $x$ satisfying $f(x) > f(\bar x)$, it suffices to prove the result 
by assuming that $\bar x$ is a minimizer of $f$. In the following we set $f_*:= f(\bar x)$. 

Since $\varphi\in C[0, \eta)$ and $\varphi(0) = 0$, we may take $\tilde r\in (0, r]$ and 
$\tilde \eta\in (0, \eta]$ such that
\begin{align}\label{ch9.1}
\tilde r + 2 \varphi(\tilde \eta) < r.
\end{align}
Let $x\in B_{\tilde r}(\bar x)\cap \{f(\bar x) < f < f(\bar x) + \tilde \eta\}$ be any 
fixed point, we define $x_0 := x$ and 
\begin{align}\label{PPA-NC}
x_k \in  \arg\min_{z\in {\mathbb R}^n} \left\{f(z) + \frac{1}{2\tau} \|z- x_{k-1}\|^2\right\}
\end{align}
for $k = 1, 2, \cdots$, where $\tau>0$ is a constant. Since $S\ne \emptyset$, $f$ is bounded 
from below. Thus, by using the weak lower semi-continuity of $f$, it is easy to show that the 
sequence $\{x_k\}$ is well-defined for $\tau>0$; due to the possible non-convexity of 
$f$, (\ref{PPA-NC}) may have many solutions, we take $x_k$ to be any one of them. 
The sequence $\{x_k\}$ does not possess the nice convergence property in general as in the 
convex situation. However, we may follow \cite{AB2009,ABS2013} and use (\ref{KL}) to show 
that $\{x_k\}$ converges to a point in $S$ with which we can establish the desired 
result. 

Indeed, by the definition of $\{x_k\}$ we have 
\begin{align}\label{H1}
f(x_k) + \frac{1}{2\tau} \|x_k - x_{k-1}\|^2 \le f(x_{k-1})
\end{align}
which implies 
\begin{align}\label{ch9.2}
f_* \le f(x_k) \le f(x_0) = f(x) < f(\bar x) + \tilde\eta 
\quad \mbox{ for all } k\ge 0.
\end{align}
By the optimality of $x_k$ and (\ref{LS.1}) we also have 
$
\frac{1}{\tau} (x_{k-1} - x_k) \in \hat \p f(x_k)
$
and thus 
\begin{align}\label{H2}
d(0, \hat \p f(x_k)) \le \frac{1}{\tau} \|x_k - x_{k-1}\|. 
\end{align}
We will use an induction argument to show that $x_k \in B_r(\bar x)$ for all 
$k\ge 0$ if 
\begin{align}\label{KL.37}
0< \tau < \frac{1}{8 \tilde \eta} \left(r - \tilde r - 2 \varphi(\tilde \eta)\right)^2
\end{align}
which will be assumed in the following. Since $x_0 = x \in B_{\tilde r}(\bar x)$ and 
$\tilde r <r$, this is trivial for $k = 0$. By using (\ref{H1}) with $k=0$ we also have
\begin{align}\label{KL.137}
\|x_{1} - x_0\| \le \sqrt{2\tau(f(x_0)-f(x_{1}))} 
\le \sqrt{2\tau(f(x)-f_*)}.
\end{align}
Therefore, by using (\ref{KL.37}),  
\begin{align*}
\|x_1 - \bar x\| \le \|x_0 - \bar x\| + \sqrt{2\tau(f(x)-f_*)} 
< \tilde r + \sqrt{2\tau \tilde \eta} < r
\end{align*}
which shows $x_1 \in B_r(\bar x)$. Now we assume that $x_k \in B_r(\bar x)$ for all 
$0\le k \le j$ for some $j \ge 1$. We will show that $x_{j+1} \in B_r(\bar x)$. Based on 
(\ref{H1}) and (\ref{H2}) we may use the same argument in the proof of Theorem 
\ref{thm:KLGC} to establish (\ref{KLGC.1}), i.e.
\begin{align*}
2\|x_{k+1}-x_k\| \le \|x_k-x_{k-1} \|
+ 2 \left[\varphi(f(x_k)-f_*) - \varphi(f(x_{k+1})-f_*)\right]
\end{align*}
for all $1\le k\le j$. Summing the above equation over $k$ from $k = 1$ to $k= j$ gives
\begin{align*}
2 \sum_{k=1}^j \|x_{k+1}-x_k\| 
\le \sum_{k=1}^j \|x_k-x_{k-1}\| + 2\varphi(f(x_0)-f_*).
\end{align*}
This implies that
\begin{align}\label{5.7.3}
\sum_{k=1}^j \|x_{k+1}-x_k\| + \|x_{j+1}-x_j\| 
\le \|x_{1} - x_0\| + 2\varphi(f(x_0)-f_*).
\end{align}
Therefore
\begin{align*}
\|x_{j+1}-\bar x\| &\le \sum_{k=1}^j \|x_{k+1}-x_k\| + \|x_{1}-x_0\| + \|x_0-\bar x\| \\
& \le  \|x-\bar x\| + 2 \|x_{1} - x_0\| + 2 \varphi(f(x)-f_*).
\end{align*}
Consequently, by virtue of (\ref{KL.137}), (\ref{ch9.1}) and (\ref{KL.37}) we then obtain
\begin{align*}
\|x_{j+1}-\bar x\| &\le \|x-\bar x\| + 2 \sqrt{2\tau(f(x)-f_*)} + 2 \varphi(f(x)-f_*) \\
& < \tilde r + \sqrt{8\tau \tilde \eta} + 2 \varphi(\tilde\eta)
< r
\end{align*}
which implies that $x_{j+1} \in B_r(\bar x)$.

Since $x_k \in B_r(\bar x)$ for all $k \ge 0$, we can see that (\ref{5.7.3}) 
holds for all $j\ge 0$. In particular
$$
\sum_{k=1}^\infty \|x_{k+1}-x_k\| 
\le \|x_{1}-x_0\| + 2 \varphi(f(x)-f_*) <\infty
$$
which shows that $\{x_k\}$ has a finite length. Therefore $\{x_k\}$ is a Cauchy 
sequence and thus it is convergent, i.e. there is $x^*\in X$ such that 
$x_k\to x^*$ as $k\to \infty$. According to (\ref{H1}) and (\ref{ch9.2}), $\{f(x_k)\}$ 
is monotonically decreasing and thus $\bar f := \lim_{k\to \infty} f(x_k)$ exists 
with $f_* \le \bar f < f_* + \tilde \eta$. If $\bar f \ne f_*$, then 
$f(x_k) \ge \bar f > f_*$ for all $k$. Thus, by (\ref{KL}) and (\ref{H2}) we have 
$$
1 \le \varphi'(f(x_k)- f_*) d(0, \hat \p f(x_k)) 
\le \frac{1}{\tau} \varphi'(f(x_k)-f_*) \|x_k - x_{k-1}\|.
$$
Since $\|x_k - x_{k-1}\| \to 0$, we must have $\varphi'(f(x_k)- f_*) \to +\infty$ 
as $k \to \infty$. By the continuity of $\varphi'$ on $(0, \tilde \eta)$ we then obtain 
$$
+ \infty = \lim_{k\to \infty} \varphi'(f(x_k)-f_*) = \varphi'(\bar f - f_*) <\infty
$$
which is a contradiction. Thus $\bar f = f_*$. By the lower semi-continuity 
of $f$ we have 
$$
f_* \le f(x^*) \le \liminf_{k\to \infty} f(x_k) = \lim_{k\to \infty} f(x_k) =\bar f = f_*. 
$$
Therefore $f(x^*) = f_*$, i.e. $x^*\in S$. Consequently, it follows from (\ref{5.7.3})
and the definition of $x_1$ that  
\begin{align*}
d(x, S) \le \|x^*-x\| &=\lim_{k\to \infty} \|x_k - x_0\| 
\le 2 \|x_1 - x_0\| + 2 \varphi(f(x_0) - f_*)\\
& \le 2 \sqrt{2\tau (f(x)-f_*)} + 2 \varphi(f(x) - f_*).
\end{align*}
Letting $\tau \to 0$ shows the desired estimate. 
\end{proof}

Let us give a brief application of Theorem \ref{thm2} to the Levenberg-Marquardt method 
considered in \cite{AAFV2019} for solving the system of nonlinear equations
$$
h(x) = 0
$$
whose solution set, denoted by $S$, is assumed to be nonempty, where $h: \RR^m \to \RR^n$ is a continuously 
differentiable mapping. This problem is equivalent to solving the minimization problem
$$
\min_{x\in \RR^m} \left\{\psi(x):= \frac{1}{2} \|h(x)\|^2\right\}. 
$$
The Levenberg-Marquardt method takes the form 
$$
x_{k+1} = x_k - (\mu_k I + J_k^T J_k)^{-1} J_k^T h(x_k),
$$
where $J_k$ denote the Jacobian of $h$ at $x_k$. The local convergence property has been analyzed in 
\cite{AAFV2019}. In particular, under the H\"{o}lderian local error bound condition at a solution $x^*\in S$,
i.e. there exist $\beta>0$, $r>0$ and $\d\in (0,1]$ such that 
\begin{align}\label{LEB}
\beta d(x, S) \le \|h(x)\|^\d, \quad \forall x \in B_r(x^*)
\end{align}
and the {\L}ojasiewicz gradient inequality at $x^*$, i.e. there exist $\kappa>0$, 
$\epsilon>0$ and $\theta\in (0, 1)$ such that 
\begin{align}\label{LGI}
\psi(x)^\theta \le \kappa \|\nabla \psi(x)\|, \quad \forall x \in B_\epsilon(x^*),
\end{align}
with careful choices of the regularization parameter $\mu_k>0$ it has been shown in 
\cite[Theorem 1]{AAFV2019} that if $\theta \in (0, 1/2]$ then $\psi(x_k) \to 0$ and 
$d(x_k, S) \to 0$ linearly as $k \to \infty$ and if $\theta\in (1/2, 1)$ then there hold 
the sublinear convergence rates
$$
\psi(x_k) = O(k^{-\frac{1}{2\theta-1}}) \quad \mbox{and} \quad d(x_k, S) = O(k^{-\frac{\d}{2(2\theta-1)}}).
$$
According to Theorem \ref{thm2}, (\ref{LGI}) implies (\ref{LEB}) with $\d = 2(1-\theta)$
and therefore (\ref{LEB}) can be dropped from \cite[Theorem 2]{AAFV2019}.

\section{\bf Growth error bound conditions for definable functions} 

In \cite{BDLS2007} it has been shown that any proper, lower semi-continuous, definable 
function $f: {\mathbb R}^n \to (-\infty, \infty]$ in an o-minimal structure on the 
real field $(\RR, +, \cdot)$ satisfies a KL condition at each point in its domain. 
To be more precise, for any $\bar x \in \mbox{dom}(f)$ and any $r>0$, 
there exist $\eta\in (0, \infty]$ and a definable desingularizing function 
$\varphi \in {\mathcal K}(0, \eta)$ such that
\begin{align} \label{OM:KL0}
\varphi'(f(x) - f(\bar x)) d(0, \p f(x)) \ge 1
\end{align}
for all $x \in B_r(\bar x) \cap \{f(\bar x) < f \le f(\bar x) + \eta\}$, Here $\p f(x)$ 
denotes the limiting subdifferential of $f$ at $x\in {\mathbb R}^n$ which is the set consisting of 
all those vectors $\xi\in {\mathbb R}^n$ such that there exist sequences $\{x_k\}$ and $\{\xi_k\}$ 
in ${\mathbb R}^n$ with the properties
$$
x_k \to x, \quad f(x_k) \to f(x) \quad \mbox{ and } \quad \xi_k\in \hat \p f(x_k)
$$
such that $\xi_k \to \xi$ as $k\to \infty$. Clearly $\hat \p f(x) \subset \p f(x)$ 
and hence from (\ref{OM:KL0}) it follows that  
\begin{align*}
\varphi'(f(x) - f(\bar x)) d(0, \hat \p f(x)) \ge 1
\end{align*}
for all $x \in B_r(\bar x) \cap \{f(\bar x) < f \le f(\bar x) + \eta\}$. Consequently 
we may use Theorem \ref{thm2} to obtain
\begin{align}\label{OM:GEB}
d(x, S) \le 2 \varphi(f(x) - f(\bar x))
\end{align}
for all $x \in B_{\tilde r}(\bar x) \cap \{f(\bar x) \le f < f(\bar x) + \tilde \eta\}$ for some 
$0<\tilde r\le r$ and $0< \tilde \eta \le \eta$, where $S:= \{x\in {\mathbb R}^n: f(x) \le f(\bar x)\}$.

Recall that the proof of (\ref{OM:KL0}) given in \cite{BDLS2007} involves a lot of 
machinery, including the existence of a Whitney stratification for the graph of $f$ and 
a projection formula of $\p f(x)$ onto the tangent space of the stratum containing $x$. 
Thus, the derivation of (\ref{OM:GEB}) using the above procedure can be rather involved. 
In the following we will provide a straightforward derivation of (\ref{OM:GEB}) which 
avoids using those involved machinery. 

Let us first recall the definition of o-minimal structures and definable functions 
(see \cite{C1999,DM1996,K1998}).

\begin{definition}
{\it An o-minimal structure\index{o-minimal structure} on the real field $({\mathbb R}, +, \cdot)$ 
is a sequence $\O = (\O_n)_n$ of collections $\O_n$, $n\in {\mathbb N}$, of subsets of 
${\mathbb R}^n$ satisfying the following axioms:

\begin{enumerate}[leftmargin = 0.8cm]
\item[\emph{(i)}] For every $n \in {\mathbb N}$, the collection $\O_n$ is closed under Boolean operations, i.e.  
finite intersections, unions, and complements.

\item[\emph{(ii)}] If $A \in \O_n$ and $B \in \O_m$, then $A \times B \in \O_{n+m}$.

\item[\emph{(iii)}]  For any set $A \in \O_{n+1}$ there holds $\pi(A) \in \O_n$, where 
$\pi : {\mathbb R}^{n+1} \to {\mathbb R}^n$ denotes the canonical projection onto ${\mathbb R}^n$,

\item[\emph{(iv)}] $\O_n$ contains the family of algebraic subsets of ${\mathbb R}^n$, that is,  
every set of the form $\{x \in {\mathbb R}^n: p(x) = 0\}$, where $p : {\mathbb R}^n \to {\mathbb R}$ 
is a polynomial function.

\item[\emph{(v)}] The elements of $\O_1$ are exactly the finite unions of open intervals and points.
\end{enumerate}
Every set in $\O_n$ is called a definable subset of ${\mathbb R}^n$. 
}
\end{definition}

\begin{definition}
{\it Given an o-minimal structure $\O =(\O_n)_n$ over the real field $({\mathbb R}, +, \cdot)$ and 
a subset $A \subset {\mathbb R}^m$. A mapping $F: A \to {\mathbb R}^n$ is called definable 
in $\O$ if its graph 
$$
\mbox{Gr}(F):=\{(x, y)\in A\times {\mathbb R}^n: y = F(x)\}
$$
is a definable set in ${\mathbb R}^m \times {\mathbb R}^n$, i.e. $\mbox{Gr}(F) \in \O_{m+n}$. 
}
\end{definition}

According to the definition of definable sets and definable functions, it is easy to derive the 
following facts (\cite{K1998}):

\begin{enumerate}[leftmargin = 0.8cm]
\item[$\bullet$] Images and inverse images of definable sets under definable functions are definable.

\item[$\bullet$] The sum and composition of definable functions are definable.

\item[$\bullet$] If $A \subset {\mathbb R}^m$ is a definable set, then $x \to d(x, A)$ is a 
definable function on ${\mathbb R}^m$. More generally, if $g: A \subset {\mathbb R}^m 
\to (-\infty, \infty]$ is a definable function that is bounded from below and $F: A \to {\mathbb R}^n$ 
is a definable mapping, then the function $\varphi: {\mathbb R}^n \to (-\infty, \infty]$ defined by 
$$
\varphi(y) := \left\{\begin{array}{lll}
\inf\{g(x): x \in F^{-1}(y)\} & \mbox{ if } F^{-1}(y) \ne \emptyset,\\
+ \infty & \mbox{ if } F^{-1}(y) = \emptyset
\end{array}\right.
$$
is also definable. 
\end{enumerate}

For single variable definable functions there holds the following monotonicity result, see 
\cite[Lemma 2]{K1998} for instance.  

\begin{lemma}[Monotonicity lemma] \label{OM:lem1}
Let $f: (a, b) \to {\mathbb R}$ be a definable function and $l \in {\mathbb N}$. 
Then there is a finite number of points $a = t_0 < t_1 < \cdots< t_k < t_{k+1} = b$ such that, 
on every interval $(t_i, t_{i+1})$, $f$ is $C^l$ and either strictly monotone or constant.
\end{lemma}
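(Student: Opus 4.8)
The plan is to lean on the single governing principle that makes o-minimality powerful: whenever a subset of $\mathbb{R}$ can be described by a first-order condition built from the (definable) graph of $f$, the order, and the field operations, that subset is again definable, since quantifiers correspond to projections by axiom (iii) and the remaining operations are covered by axioms (i) and (iv); by axiom (v) it is then a finite union of points and open intervals. Consequently every ``bad behaviour set'' I construct is automatically finite as soon as I show it contains no open subinterval. The lemma thus splits into three tasks — pruning discontinuities, splitting into monotone or constant pieces, and upgrading to $C^l$ — each handled by isolating a definable set and excluding intervals inside it.

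First I would show that $f$ is continuous off a finite set. The set $C$ of continuity points is definable (the $\varepsilon$--$\delta$ description uses only quantifiers and polynomial inequalities) and hence a finite union of points and open intervals; if I prove $C$ is dense in $(a,b)$, its complement can contain no open interval and is therefore finite. Density reduces to the claim that no subinterval exists on which $f$ is nowhere continuous, equivalently that every definable $g\colon (c,d)\to\mathbb{R}$ is continuous somewhere. To see this I would pass to the closure $\overline{\Gamma}$ of the graph, which is again definable; for each $x$ the vertical fibre $\overline{\Gamma}_x$ is a definable subset of $\mathbb{R}$, and the set of $x$ for which this fibre is a single point is definable, so a second application of axiom (v) produces a subinterval on which $\overline{\Gamma}_x=\{g(x)\}$, forcing continuity there. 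The case where $g$ is locally unbounded is treated by the same closure/definable-fibre device after separating, definably, the locally bounded and locally unbounded parts.

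Assuming now that $f$ is continuous on a working subinterval, I would partition by local behaviour. The sets $I_{+}=\{x:\exists\,\delta>0,\ f \text{ is strictly increasing on }(x-\delta,x+\delta)\}$, its decreasing analogue $I_{-}$, and the locally constant set $I_{0}$ are each definable and hence finite unions of points and open intervals; on each component of $I_{0}$ continuity makes $f$ genuinely constant, giving finitely many constant pieces. On the remaining open set it suffices to show there are only finitely many local extrema, again a definable-set-contains-no-interval statement, after which $f$ is locally injective; since a continuous injective function on an interval is strictly monotone by the intermediate value theorem, each maximal subinterval of $I_{+}$ (resp.\ $I_{-}$) carries a globally strictly increasing (resp.\ decreasing) $f$. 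Intersecting the finitely many partitions and discarding finitely many endpoints yields the desired decomposition into intervals of strict monotonicity or constancy. Finally, the $C^l$ refinement follows by induction on $l$: on a monotone-or-constant piece the set of non-differentiability points is definable and, by the same interval-exclusion principle, finite, so the derivative $f'$ is a definable function on a cofinite set, to which the already-established continuous-monotonicity result applies, and iterating $l$ times produces the $C^l$ conclusion. The main obstacle throughout is precisely this recurring conversion of ``definable'' into ``finite,'' and its hardest instance is ruling out an entire interval of discontinuity, which is why the detour through the closure of the graph and its definable fibres is the technical heart of the argument.
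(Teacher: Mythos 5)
The paper does not prove this lemma at all: it is quoted verbatim from Kurdyka \cite[Lemma 2]{K1998} (the Monotonicity Theorem of o-minimality; see also van den Dries--Miller). So the only question is whether your sketch would stand on its own, and it would not yet: at the two decisive points you invoke the principle ``definable and containing no interval, hence finite'' without establishing the ``containing no interval'' half, which is where essentially all of the work lies. Concretely, for continuity you reduce to showing that the definable set $B=\{x:\overline{\Gamma}_x\neq\{g(x)\}\}$ contains no interval, but the closure-of-the-graph device produces no contradiction from the assumption that $B$ does contain an interval; you would still have to manufacture one, and that is the hard step. The standard argument sidesteps this by proving things in the opposite order: first, either $f$ is constant on a subinterval or every fibre $f^{-1}(y)$ is finite, in which case $y\mapsto\min f^{-1}(y)$ is a definable injection whose image contains an interval on which $f$ is injective; second, a definable injection on an interval is strictly monotone on a subinterval (a separate argument on the sign of $f(y)-f(x)$ for $x<y$); third, a strictly monotone definable function is continuous on a subinterval, because its image contains an interval $J$ and $f$ restricts to a monotone bijection from the interval $f^{-1}(J)$ onto $J$. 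Continuity is thus a corollary of monotonicity, not a preliminary to it, and your attempt to obtain it first is precisely where the outline breaks.

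The $C^1$ step has the same defect: you assert that the set of non-differentiability points is finite ``by the same interval-exclusion principle,'' but no device is offered to exclude an interval of bad points. One must first observe that the one-sided limits $f'_{\pm}(x)=\lim_{h\to 0^{\pm}}(f(x+h)-f(x))/h$ exist in $[-\infty,+\infty]$ at every $x$ (this does follow from applying the already-proved $C^0$ statement to the one-variable definable difference quotient), and then rule out, on a whole subinterval, the cases $f'_{+}\equiv+\infty$ (which contradicts finiteness of $f$ via a growth estimate) and $f'_{+}\neq f'_{-}$; neither exclusion is an instance of ``a definable subset of $\RR$ with empty interior is finite'' --- each needs an analytic argument exploiting the monotonicity already established. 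The remainder of your outline (the partition into locally increasing, decreasing and constant sets, finiteness of local extrema, injective continuous implies strictly monotone via the intermediate value theorem, induction on $l$) is sound, but without the two missing arguments the proof does not close.
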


Next we prove a result which generalizes the well known {\L}ojasiewicz inequality for continuous 
definable functions on compact sets, see \cite[Theorem 0]{K1998} for instance. Our result 
allows one of the functions to be lower semi-continuous which is crucial for proving 
(\ref{OM:GEB}). We will make use of ideas from \cite{LP2022}. 

\begin{proposition}\label{SA.lem3}
Let $K$ be a compact definable set in ${\mathbb R}^n$ and let $g: K \to [0, \infty)$ 
and $h: K \to [0, \infty]$ be definable functions. If $g$ is continuous, $h$ is 
lower semi-continuous and 
\begin{align}\label{SA.1}
\{x\in K: h(x)=0\} \subset \{x\in K: g(x) = 0\},
\end{align}
then there exists a strictly increasing, definable, concave function 
$\varphi \in C[0, \infty) \cap C^1(0, \infty)$ with $\varphi(0) = 0$ such that 
$$
g(x) \le \varphi(h(x)),  \quad \forall x \in K\cap \emph{dom}(h).
$$
\end{proposition}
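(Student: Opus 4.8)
The plan is to reduce the problem to a single one-dimensional definable function and then manufacture a desingularizing $\varphi$ out of it. Define, for $s \ge 0$,
$$
\theta(s) := \sup\{g(x): x \in K,\ 0 < h(x) \le s\},
$$
with the convention $\sup\emptyset = 0$ (so $\theta(0)=0$). Using the closure properties of definable sets and functions listed above --- in particular stability under projection (axiom (iii)) --- the graph of $\theta$ is definable, so $\theta$ is a definable function. It is non-decreasing by construction, and since $g$ is continuous on the compact set $K$ it is bounded: $0 \le \theta \le M := \max_K g < \infty$. Moreover $\theta$ already encodes the desired estimate at its own level: for every $x \in K \cap \mathrm{dom}(h)$ with $h(x) > 0$ we have $g(x) \le \theta(h(x))$, while for $h(x)=0$ the inclusion (\ref{SA.1}) forces $g(x)=0$. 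Hence any majorant $\varphi \ge \theta$ with $\varphi(0)=0$ will yield $g(x) \le \varphi(h(x))$ on all of $K \cap \mathrm{dom}(h)$.

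The crucial analytic step is to show $\lim_{s\to 0^+}\theta(s)=0$; this is the only place where the hypotheses genuinely interact. Let $c := \inf_{s>0}\theta(s) = \lim_{s\to0^+}\theta(s) \ge 0$ and suppose $c>0$. For each $k$ pick $x_k \in K$ with $0 < h(x_k) \le 1/k$ and $g(x_k) > \theta(1/k) - 1/k \ge c - 1/k$ (such $x_k$ exists since $\theta(1/k) \ge c > 0$ forces the defining set to be nonempty). Passing to a subsequence, $x_k \to x^* \in K$ by compactness. Continuity of $g$ gives $g(x^*) \ge c$, while lower semicontinuity of $h$ gives $h(x^*) \le \liminf_k h(x_k) = 0$, so $h(x^*)=0$; then (\ref{SA.1}) yields $g(x^*)=0$, contradicting $g(x^*) \ge c > 0$. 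Thus $c = 0$, i.e. $\theta$ is continuous at $0$ with $\theta(0)=0$.

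It remains to dominate the definable, bounded, non-decreasing $\theta$ with $\theta(0^+)=0$ by a function of the required regularity. I would first pass to the least concave majorant $\hat\theta$, which in one dimension is $\hat\theta(t)=\sup\{\lambda\theta(u)+(1-\lambda)\theta(v): \lambda\in[0,1],\ \lambda u+(1-\lambda)v = t\}$; this is again definable (a projection), concave, non-decreasing and bounded by $M$, and comparing with affine majorants of $\theta$ of the form $t \mapsto \varepsilon + (M/\delta)t$ shows $\hat\theta(0^+)=0$. Since $\hat\theta$ is concave and definable, its right derivative is a non-increasing definable function with only finitely many jumps (by the Monotonicity Lemma~\ref{OM:lem1}, after reparametrising $(0,\infty)$ to a bounded interval), so $\hat\theta$ is $C^1$ off a finite set of concave corners. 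I would then round each of those finitely many corners from above by a short definable (e.g. parabolic, hence semialgebraic) arc to obtain a $C^1$ concave definable majorant of $\hat\theta$, and finally add a small multiple of the semialgebraic concave strictly increasing function $t\mapsto t/(1+t)$ to guarantee strict monotonicity and $\varphi'>0$ while preserving concavity, $C^1$-smoothness, definability and $\varphi(0)=0$. The resulting $\varphi \ge \hat\theta \ge \theta$ then delivers $g(x) \le \varphi(h(x))$ on $K\cap\mathrm{dom}(h)$.

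I expect the main obstacle to be this last construction: producing a majorant that is simultaneously \emph{definable} and of class $C^1$. Because a general o-minimal structure need not contain analytic mollifiers, the usual smoothing by convolution is unavailable, and one must upgrade $\hat\theta$ to class $C^1$ by a purely definable operation (rounding its finitely many corners), taking care that the rounding stays above $\hat\theta$ and does not disturb the behaviour $\varphi(0^+)=0$. The reduction to a one-variable monotone function and the limit $\theta(0^+)=0$ are, by contrast, clean consequences of o-minimality and compactness.
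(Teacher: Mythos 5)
Your reduction to the one--variable function $\theta(s)=\sup\{g(x):x\in K,\ 0<h(x)\le s\}$, the compactness argument giving $\theta(0^+)=0$, and the passage to the least concave majorant $\hat\theta$ with $\hat\theta(0^+)=0$ are all correct. This part closely parallels the paper's proof, which works instead with $\mu(t)=\sup_{x\in h^{-1}(t)}g(x)$ on an interval $[0,T]\subset h(K)$ and proves continuity of $\mu$ at $0$ by the same compactness/lower-semicontinuity argument; your sublevel-set supremum is monotone by construction and absorbs the several degenerate cases the paper must treat separately, which is a genuine (if minor) simplification.

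The gap is in the final smoothing step: you cannot round a concave corner of $\hat\theta$ \emph{from above} by a short arc while remaining concave and agreeing with $\hat\theta$ outside a small neighbourhood of the corner. For the glued function to be concave its derivative must be non-increasing across the gluing points; together with $\psi\ge\hat\theta$ and $\psi(t_0\pm\delta)=\hat\theta(t_0\pm\delta)$ this forces the arc $\psi$ to match $\hat\theta$ to first order at both endpoints, and concavity then traps $\psi$ below the two tangent lines of $\hat\theta$ at $t_0\mp\delta$. When $\hat\theta$ is affine on each side of the corner --- the typical situation for a least concave majorant, e.g.\ $\hat\theta(t)=\min\{t,1\}$, which is its own concave envelope --- those tangent lines are the two branches of $\hat\theta$, so $\psi\le\hat\theta$ and hence $\psi=\hat\theta$: the corner cannot be removed locally. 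The repair is to give up on preserving $\hat\theta$ away from the origin. By Lemma \ref{OM:lem1} there is $\varepsilon>0$ such that $(0,\varepsilon]$ avoids the finitely many non-$C^1$ points; keep (a constant multiple $C_0\ge(M+1)/\hat\theta(\varepsilon)$ of) $\hat\theta$ on $[0,\varepsilon]$ and replace it on $(\varepsilon,\infty)$ by its tangent line at $\varepsilon$, which majorizes the concave $\hat\theta$ everywhere and glues in a $C^1$, concavity-preserving way; your additive term $c\,t/(1+t)$ then supplies strict monotonicity. This tangent-line truncation is precisely the device used in the paper, which applies the Monotonicity Lemma to get a corner-free interval $(0,\varepsilon]$ on which $\mu''$ has constant sign and extends affinely beyond $\varepsilon$ (your concavification conveniently eliminates the convex case the paper must also handle). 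So the strategy is sound, but as written the corner-rounding step is not merely unjustified --- it is impossible.
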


\begin{proof}
For any $t \in \RR$ we set $h^{-1}(t) :=\{x\in K: h(x) =t\}$ and similarly for $g^{-1}(t)$. 
If $h^{-1}(0) = K$, then $g^{-1}(0)= K$ by (\ref{SA.1}) and thus the result holds 
trivially with $\psi(t) = t$. If $h^{-1}(0) = \emptyset$, then by the lower semi-continuity 
and nonnegativity of $h$ we can see that $h$ has a positive lower bound on $K$. 
Since $g$ is continuous on $K$ and $K$ is compact, we have 
$$
0\le M: = \sup_{x\in K} g(x) <\infty 
$$
and thus the result holds again with $\psi(t) = C t$ for some positive constant $C$. We therefore 
need only to consider the situation that $h^{-1}(0) \ne \emptyset$ and $h^{-1}(0) \ne K$. 

Since $K$ is a definable set and $h: K \to {\mathbb R}$ is a definable function, 
$h(K)$ is a definable set in $\RR$ and hence $h(K)$ is a finite union of points and 
intervals with $0 \in h(K)$. 

If $0$ is an isolated point in $h(K)$, then there is a constant $\ep>0$ such that 
$h(x) \ge \ep$ whenever $h(x) \ne 0$. Therefore for $x \in K$ with $h(x) <\ep$ 
we have $h(x) =0$ and hence $g(x)=0$ by the given condition (\ref{SA.1}); while 
for $x \in K$ with $h(x)\ge \ep$ we have 
$$
g(x) \le M +1  \le \frac{M+1}{\ep} h(x). 
$$
We thus obtain the desired result with $\psi(t) := (M+1) t/\ep$. 

In the following we therefore assume $0$ is not an isolated point in $h(K)$. By the 
structure of $h(K)$ as a definable set in $\RR$, we can find $T>0$ such that 
$[0, T] \subset h(K)$. Thus $h^{-1}(t) \ne \emptyset$ for each $t \in [0, T]$. Define 
$$
\mu(t):= \sup_{x\in h^{-1}(t)} g(x), \qquad t \in [0, T]
$$
Then $\mu$ is a definable function on $[0, T]$ and $\mu(0) =0$ by the given 
condition (\ref{SA.1}). 

We claim that $\mu$ is continuous at $t= 0$. If not, then there exist $\d>0$ and 
a sequence $\{t_k\}\subset (0, T]$ with $t_k \to 0$ as $k \to \infty$ such that 
$\mu(t_k) \ge \d$ for all $k$. By the definition of supremum, there is 
$x_k \in h^{-1}(t_k)$ such that 
$$
g(x_k) \ge \mu(t_k) - \d/2 \ge \d/2, \quad \forall k.  
$$
Since $\{x_k\}\subset K$ and $K$ is compact, by taking a subsequence if necessary,
we may assume $x_k \to \bar x$ as $k \to \infty$ for some $\bar x \in K$. By the 
continuity of $g$ we then have $g(\bar x) \ge \d/2>0$. By the lower semi-continuity 
of $h$ we also have 
$$
0 \le h(\bar x) \le \liminf_{k \to \infty} h(x_k) = \lim_{k\to \infty} t_k = 0
$$
which means $h(\bar x) =0$. This together with (\ref{SA.1}) shows $g(\bar x) =0$ 
which is a contradiction. 

Now we apply Lemma \ref{OM:lem1} to $\mu$  and use the continuity of $\mu$ at $t = 0$
to conclude the existence of $\ep\in (0, T)$ such that $\mu \in C[0, \ep] \cap C^1(0, \ep]$.
$\mu(0) =0$, and $\mu$ is either constant or strictly monotone on $[0, \ep]$. 

If $\mu$ is constant on $[0, \ep]$, then by using $\mu(0) =0$ we have $\mu(t) =0$ 
for all $t \in [0, \ep]$. Thus for $x \in K$ with $h(x) \le \ep$ we have $g(x) =0$; 
while for $x \in K$ with $h(x) > \ep$ we have $g(x) \le \frac{M+1}{\ep} h(x)$. 
Thus the desired result holds with $\psi(t) = (M+1)t/\ep$.

If $\mu$ is strictly monotone on $[0, \ep]$, by applying Lemma \ref{OM:lem1} to $\mu$
and $\mu'$ we may have $\mu(t) >0$, $\mu'(t)>0$ and $\mu''(t)$ has a constant 
sign on $(0, \ep]$ by shrinking $\ep$ if necessary. By the definition of $\mu$ we have 
for all $x \in K$ with $t:= h(x) \le \ep$ that
$$
g(x) \le \mu(t) = \mu(h(x)).
$$
In case $\mu'' \le 0$ on $(0, \ep]$, $\mu$ is a concave function on $[0, \ep]$. Therefore, 
by introducing the function 
\begin{align*}
\varphi(t):= \left\{\begin{array}{lll}
C_0 \mu(t), & 0 \le t \le \ep, \\
C_0 \left(\mu(\ep) + \mu'(\ep) (t - \ep)\right), & \ep < t < \infty
\end{array}\right.
\end{align*}
with $C_0:= \max\{1, (M+1)/\mu(\ep)\}$ and noting that $g(x) < M+1$ for $x \in K$, it 
is easy to see that $g(x) \le \varphi(h(x))$ for all $x \in K$ and 
$\varphi \in C[0, \infty)\cap C^1(0, \infty)$ is strictly increasing, definable 
and concave with $\varphi(0) = 0$. In case $\mu''>0$ on $(0, \ep]$, $\mu$ is a convex 
function on $[0, \ep]$. By using $\mu(0) =0$ we then have $\mu(t) \le \frac{\mu(\ep)}{\ep}t$ 
for $t \in [0, \ep]$. Define 
$$
\varphi(t) = \max\left\{\frac{\mu(\ep)}{\ep}, \frac{M+1}{\ep}\right\} t, \quad t \in [0, \infty).
$$
Again we can easily verify $g(x) \le \varphi(h(x))$ for $x \in K$. The proof is therefore complete.
\end{proof}

By virtue of Proposition \ref{SA.lem3}, we are now ready to prove a growth error bound 
for definable functions which in particular implies (\ref{OM:GEB}). 

\begin{theorem}\label{thm3}
Let $f: {\mathbb R}^n \to (-\infty, \infty]$ be a proper, lower semi-continuous, 
definable function and let $\gamma\in \RR$ be such that 
$$
S := \{x \in \RR^n: f(x) \le \gamma\} \ne \emptyset. 
$$ 
Then for any $\bar x \in \RR^n$ and $r>0$ there exists a strictly increasing,  
definable, concave function $\varphi\in C[0, \infty) \cap C^1(0, \infty)$ with 
$\varphi(0) = 0$ such that
\begin{align}\label{GEB2}
d(x, S) \le \varphi([f(x) - \gamma]_+)
\end{align}
for all $x \in B_r(\bar x)\cap \emph{dom}(f)$.  
\end{theorem}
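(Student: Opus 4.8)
The plan is to derive the bound directly from Proposition \ref{SA.lem3} by choosing its three ingredients $K$, $g$, $h$ appropriately and verifying its hypotheses. First I would take $K := \overline{B_r(\bar x)}$, the closed ball of radius $r$ centered at $\bar x$, which is a compact definable subset of $\RR^n$. Then I would set $g(x) := d(x, S)$ as the quantity to be controlled and $h(x) := [f(x)-\gamma]_+ = \max\{f(x)-\gamma, 0\}$ as the controlling quantity. With these choices the inequality $d(x,S)\le \varphi([f(x)-\gamma]_+)$ is precisely the conclusion $g(x) \le \varphi(h(x))$ of Proposition \ref{SA.lem3} on $K \cap \mbox{dom}(h)$; since $\mbox{dom}(h) = \mbox{dom}(f)$ and $B_r(\bar x) \subset K$, restricting this conclusion to $B_r(\bar x) \cap \mbox{dom}(f)$ would yield (\ref{GEB2}).

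The bulk of the work is checking that $g$ and $h$ satisfy the structural requirements. The set $S$ is definable, being the inverse image $f^{-1}((-\infty,\gamma])$ of a definable set under the definable function $f$, and it is closed because $f$ is lower semi-continuous. Hence $g = d(\cdot, S)$ is definable (distance to a definable set), nonnegative, and $1$-Lipschitz, in particular continuous, so $g: K \to [0,\infty)$. The function $h = [f-\gamma]_+$ is definable (a composition of $f - \gamma$ with the definable map $t \mapsto \max\{t,0\}$), nonnegative, $[0,\infty]$-valued, and lower semi-continuous as the pointwise maximum of the lower semi-continuous function $f - \gamma$ and the constant $0$; thus $h: K \to [0,\infty]$.

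It remains to verify the zero-set inclusion (\ref{SA.1}). On $K$ one has $\{x \in K: h(x)=0\} = \{x\in K: f(x)\le \gamma\} = K \cap S$, while $\{x\in K: g(x)=0\} = \{x \in K: d(x,S)=0\} = K \cap \overline{S} = K \cap S$, the last equality using that $S$ is closed. In particular the two zero sets coincide, so (\ref{SA.1}) holds. Proposition \ref{SA.lem3} then furnishes a strictly increasing, definable, concave $\varphi \in C[0,\infty)\cap C^1(0,\infty)$ with $\varphi(0)=0$ satisfying $g(x)\le \varphi(h(x))$ for all $x \in K\cap\mbox{dom}(h)$, which is the asserted estimate.

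I expect no serious analytic obstacle here, since the delicate arguments---the continuity of the marginal function $\mu$ at the origin and the case analysis via the Monotonicity Lemma---have already been carried out in Proposition \ref{SA.lem3}. The only point requiring genuine care is the correct verification of the definability and semicontinuity of $d(\cdot, S)$ and $[f-\gamma]_+$ and the exact matching of their zero sets; once these are confirmed, the theorem follows immediately by invoking the proposition.
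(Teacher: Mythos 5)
Your proposal is correct and follows essentially the same route as the paper: the paper's proof likewise sets $g(x) := d(x,S)$ and $h(x) := [f(x)-\gamma]_+$ and applies Proposition \ref{SA.lem3} on the compact definable set $\overline{B_r(\bar x)}$, with the zero-set inclusion following from $h(x)=0 \Rightarrow x \in S \Rightarrow g(x)=0$. Your verification of definability, semicontinuity, and the matching of zero sets is simply a more detailed write-up of the same argument.
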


\begin{proof}
Let 
$$
g(x) := d(x, S) \quad \mbox{ and } \quad h(x):= [f(x) - \gamma]_+. 
$$
Then $g$ is continuous and $h$ is lower semi-continuous on ${\mathbb R}^n$. 
By the definability of $f$ we can see that both $g$ and $h$ are definable. 
Moreover, $h(x) =0$ implies $g(x) =0$. Thus, we may apply Proposition \ref{SA.lem3}
to $g$ and $h$ on the compact definable set $\overline{B_r(\bar x)}$ 
to conclude the proof. 
\end{proof}

For semi-algebraic functions we can improve the result in Theorem \ref{thm3} by establishing 
H\"{o}lder growth error bounds. To see this, recall that a set in ${\mathbb R}^n$ is called 
semi-algebraic if it is a union of finite many sets of the form 
$$
\{x\in {\mathbb R}^n: f(x) =0 \mbox{ and } f_i(x) <0 \mbox{ for } i = 1, \cdots,p\},
$$
where $f, f_i: {\mathbb R}^n \to {\mathbb R}$ are real polynomials, and a function 
$f: {\mathbb R}^n \to (-\infty, \infty]$ is called semi-algebraic if its graph is a 
semi-algebraic set in ${\mathbb R}^{n+1}$, see \cite{C2002}. It is known that 
$\mathcal{SA}:= \{\mathcal{SA}_n\}$ is the smallest o-minimal structure on the real field 
$({\mathbb R}, +, \cdot)$, where $\mathcal{SA}_n$ denotes the collection of all semi-algebraic 
sets in ${\mathbb R}^n$. Thus, for a proper, lower semi-continuous, semi-algebraic function 
$f: {\mathbb R}^n \to (-\infty, \infty]$, we may use Theorem \ref{thm3} to conclude that 
(\ref{GEB2}) holds with a strictly increasing, semi-algebraic, concave function 
$\varphi\in C[0, \infty) \cap C^1(0, \infty)$ satisfying $\varphi(0) = 0$. Applying the 
Puiseux lemma (\cite{W2004}) to $\varphi$ we can conclude that 
$$
\varphi(t) \le C t^{\a}, \quad \forall t \ge 0
$$
for some constants $C>0$ and $\a \in (0, 1]$. Consequently we have the following result. 

\begin{corollary}\label{cor2}
Let $f: {\mathbb R}^n \to (-\infty, \infty]$ be a proper, lower semi-continuous, semi-algebraic function 
and let $\gamma\in {\mathbb R}$ be such that $S:=\{x\in {\mathbb R}^n: f(x) \le \gamma\} \ne \emptyset$. 
Then for any $\bar x \in {\mathbb R}^n$ and $r>0$ there exist constants $C>0$ and $\a \in (0, 1]$ 
such that 
$$
d(x, S) \le C [f(x) - \gamma]_+^\a, \quad \forall x \in B_r(\bar x). 
$$
\end{corollary}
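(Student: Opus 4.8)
The plan is to deduce the corollary from Theorem \ref{thm3} by upgrading the abstract desingularizing function produced there to an explicit power. First I would recall that the collection of semi-algebraic sets is the smallest o-minimal structure on $(\RR, +, \cdot)$, so a semi-algebraic $f$ is in particular definable in it and Theorem \ref{thm3} applies verbatim: for the given $\bar x$ and $r$ it yields a strictly increasing, semi-algebraic, concave function $\varphi \in C[0, \infty) \cap C^1(0, \infty)$ with $\varphi(0) = 0$ such that $d(x, S) \le \varphi([f(x) - \gamma]_+)$ for all $x \in B_r(\bar x) \cap \mbox{dom}(f)$. The task then reduces to controlling the single-variable semi-algebraic function $\varphi$ by a power $t \mapsto C t^\a$ on the range of values taken by $[f(x)-\gamma]_+$ over the ball.

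Next I would extract the exponent at the origin. Since $\varphi$ is a one-variable semi-algebraic function, the Puiseux lemma (\cite{W2004}) supplies a fractional power-series expansion of $\varphi$ at $t = 0^+$, so that $\varphi(t) = a t^\a (1 + o(1))$ as $t \to 0^+$ for some rational exponent $\a$ and leading coefficient $a$; here $a>0$ and $\a>0$ because $\varphi$ is strictly increasing with $\varphi(0)=0$, while the concavity of $\varphi$ forces $\a \le 1$. In particular there exist $\d>0$ and $C_1>0$ with $\varphi(t) \le C_1 t^\a$ for all $t \in [0, \d]$, where $\a \in (0,1]$.

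The step I expect to be the main obstacle is passing from this local (near-zero) power bound to the global inequality asserted for all $x \in B_r(\bar x)$. The function $\varphi$ need not be globally comparable to a single power: the $\varphi$ built in Proposition \ref{SA.lem3} is merely affine for large arguments, so $\varphi(t) \le C t^\a$ cannot possibly hold for all $t \ge 0$ once $\a<1$. I would circumvent this by exploiting that the left-hand side is bounded on the ball. Indeed, $d(\cdot, S)$ is $1$-Lipschitz, so $d(x, S) \le d(\bar x, S) + \|x - \bar x\| < d(\bar x, S) + r =: D$ for every $x \in B_r(\bar x)$, and $D<\infty$ since $S \ne \emptyset$. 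Hence for those $x$ with $[f(x) - \gamma]_+ \ge \d$ one has $d(x, S) \le D \le (D/\d^\a)\,[f(x)-\gamma]_+^\a$, while for $[f(x)-\gamma]_+ < \d$ the near-zero estimate combined with Theorem \ref{thm3} gives $d(x,S) \le \varphi([f(x)-\gamma]_+) \le C_1 [f(x)-\gamma]_+^\a$; and when $x \notin \mbox{dom}(f)$ the right-hand side equals $+\infty$, so the inequality is trivial. Setting $C := \max\{C_1, D/\d^\a\}$ then yields $d(x, S) \le C [f(x)-\gamma]_+^\a$ uniformly on $B_r(\bar x)$, which completes the argument.
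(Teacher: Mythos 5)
Your proposal follows essentially the same route as the paper: invoke Theorem \ref{thm3} (using that semi-algebraic sets form the smallest o-minimal structure) to obtain a strictly increasing, semi-algebraic, concave desingularizing function $\varphi$, and then apply the Puiseux lemma to replace $\varphi$ by a power $Ct^{\a}$. The only difference is that you are more careful at the last step: the paper asserts $\varphi(t)\le Ct^{\a}$ for all $t\ge 0$, which cannot hold verbatim for the $\varphi$ constructed in Proposition \ref{SA.lem3} when $\a<1$ (that $\varphi$ is affine at infinity), and your use of the boundedness of $d(\cdot,S)$ on $B_r(\bar x)$ to dispose of the case $[f(x)-\gamma]_+\ge\d$ supplies exactly the justification the paper elides.
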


\begin{remark}
For a given semi-algebraic function, Corollary \ref{cor2} establishes the H\"{o}lder growth 
error bound condition for some parameter $\a\in (0,1]$. Nevertheless, it does not specify the precise 
value of $\a$.  Determining this value presents a challenging task. Interested readers can 
find relevant insights in \cite{Li2013}, where further results on this aspect have been discussed.
\end{remark}

\section{\bf An application: adaptive heavy ball method}

Let $f: \RR^d \to \RR$ be a continuous differentiable convex function. We consider solving the convex 
minimization problem 
\begin{align}\label{conv.1}
\min_{x\in {\mathbb R}^d} f(x)
\end{align}
by the heavy ball method
\begin{align}\label{conv.2}
x_{k+1} = x_k - \a_k \nabla f(x_k) + \beta_k (x_k - x_{k-1}),
\end{align}
where $x_{-1} = x_0$ is the initial guess, $\a_k$ is the step-size and $\beta_k$ is the 
momentum coefficient. We assume that the gradient $\nabla f$ is $L$-Lipschitz continuous, i.e. 
\begin{align}\label{conv.3}
\|\nabla f(x) - \nabla f(y) \| \le L\|x-y\|, \quad \forall x, y \in \RR^d.
\end{align}
The performance of (\ref{conv.2}) depends crucially on the property of $f$ and the choices of 
the step-size $\a_k$ and the momentum coefficient $\beta_k$, and its analysis turns out to 
be very challenging. When $f$ is twice continuous differentiable, strongly convex and has 
Lipschitz continuous gradient, it has been demonstrated in \cite{P1964}, under suitable 
constant choices of $\a_k$ and $\beta_k$, that the iterative sequence $\{x_k\}$ enjoys a provable 
linear convergence faster than the gradient descent. Due to the recent development of machine 
learning and the appearance of large scale problems, the heavy ball method has gained renewed 
interest and much attention has been paid on understanding its convergence behavior 
(\cite{GFJ2015,N2018,O2018,OCBP2014,ZK1993}). Although many strategies have been proposed 
to select $\a_k$ and $\beta_k$, how to choose these parameters to achieve acceleration effect 
for general convex and non-convex problems remains an active research topic in optimization 
community. 

In this section we will consider the method (\ref{conv.2}) for solving convex problem (\ref{conv.1}) 
under the assumption that the minimum value 
$$
f_*:= \min_{x\in \RR^d} f(x) 
$$
is known and provide a strategy on choosing the step-size $\a_k$ and the momentum coefficient $\beta_k$ 
adaptively. We will then show that our proposed method, i.e. Algorithm \ref{alg:AHB} below, 
is well-defined and convergent. 

In order for the method (\ref{conv.2}) to have fast convergence, it is natural to choose $\a_k$ and 
$\beta_k$ at each iteration step such that $x_{k+1}$ is as close to a solution of (\ref{conv.1}) 
as possible. Let $\hat x$ denote any solution of (\ref{conv.1}). A naive idea is to directly minimize
$\|x_{k+1} - \hat x\|^2$ with respect to $\a_k$ and $\beta_k$. Unfortunately, directly minimizing 
this quantity becomes infeasible because it involves an unknown solution $\hat x$. As a compromise, 
we may first derive a suitable upper bound for this quantity and then take minimization steps. 

We will elucidate the idea on choosing $\a_k$ and $\beta_k$ below. For simplicity of 
exposition, we introduce the notation
$$
g_k := \nabla f(x_k) \quad \mbox{ and } \quad 
m_k := x_k - x_{k-1}
$$
for all integers $k\ge 0$. Then  (\ref{conv.2}) can be written as 
\begin{align}\label{conv.4}
x_{k+1} = x_k - \a_k g_k + \beta_k m_k. 
\end{align}
By the polarization identity, we can obtain
\begin{align*}
\|x_{k+1} - \hat x\|^2 
& = \|x_k - \hat x - \a_k g_k + \beta_k m_k\|^2 \\
& = \|x_k - \hat x\|^2 + \|\a_k g_k - \beta_k m_k\|^2 + 2 \l -\a_k g_k + \beta_k m_k, x_k - \hat x\r \displaybreak[0]\\
& = \|x_k - \hat x\|^2 + \a_k^2 \|g_k\|^2 - 2\a_k \beta_k \l g_k, m_k\r + \beta_k^2 \|m_k\|^2 \\
& \quad \, - 2 \a_k \l g_k, x_k - \hat x\r + 2 \beta_k \l m_k, x_k - \hat x\r.
\end{align*}
Since $f$ is convex and $\nabla f$ is $L$-Lipschitz continuous, we may use $\nabla f(\hat x) =0$ and 
the co-coercive property of $\nabla f$ to obtain  
$$
\l g_k, x_k - \hat x\r \ge f(x_k) - f(\hat x) + \frac{1}{2L} \|g_k\|^2.
$$
Therefore, by using $f(\hat x) = f_*$, we have  
\begin{align}\label{conv.5}
\|x_{k+1} - \hat x\|^2 
& \le \|x_k - \hat x\|^2 - \left(\frac{1}{L}-\a_k\right) \a_k \|g_k\|^2 
- 2\a_k \left(f(x_k) - f_*\right) \nonumber \\
& \quad \, - 2\a_k \beta_k \l g_k, m_k\r + \beta_k^2 \|m_k\|^2 
+ 2 \beta_k \l m_k, x_k - \hat x\r.
\end{align}
Note that when $\beta_k = 0$ for all $k$, the method (\ref{conv.2}) becomes the gradient descent 
method and extensive analysis has been done with the choices of the step-size $\a_k = \mu/L$
for some $0<\mu < 2$. For the method (\ref{conv.2}) we also choose $\a_k$ as such, i.e.  
\begin{align}\label{conv.6}
\a_k := \frac{1+\mu_0}{L} \quad \mbox{ with } 0 \le \mu_0 < 1. 
\end{align}
Then, by plugging the choice of $\a_k$ from (\ref{conv.6}) into (\ref{conv.5}) and using the inequality 
$\frac{1}{2L} \|g_k\|^2 \le f(x_k) - f_*$, we have 
\begin{align}\label{conv.7}
\|x_{k+1} -\hat x\|^2 
& \le \|x_k - \hat x\|^2 - 2(1-\mu_0)\a_k(f(x_k) - f_*) + \beta_k^2 \|m_k\|^2 \nonumber \\
& \quad \, - 2\a_k \beta_k \l g_k, m_k\r + 2 \beta_k \l m_k, x_k - \hat x\r.
\end{align}
We next consider the choice of $\beta_k$. A natural idea to obtain it is to minimize the right 
hand side of the above equation with respect to $\beta_k$. By minimizing the right hand side of 
(\ref{conv.6}) with respect to $\beta_k$ over the interval $[0, \beta]$ for a preassigned number 
$\beta \in (0, \infty]$ we can obtain 
$$
\beta_k = \min\left\{\max\left\{0, \frac{\a_k \l g_k, m_k\r - \l m_k, x_k - \hat x\r}{\|m_k\|^2}\right\}, \beta\right\}
$$
whenever $m_k \ne 0$. However, this formula for $\beta_k$ is not computable because 
it involves $\hat x$ which is unknown. We need to treat the term 
\begin{align}\label{conv.8}
\gamma_k := \l m_k, x_k - \hat x \r 
\end{align}
by using a suitable computable surrogate. Note that $\gamma_0 = 0$ and, for $k\ge 1$, 
\begin{align*}
\gamma_k & = \l m_k, x_k - x_{k-1} \r + \l m_k, x_{k-1} - \hat x\r \\
& = \|m_k\|^2 + \l -\a_{k-1} g_{k-1} + \beta_{k-1} m_{k-1}, x_{k-1} - \hat x\r \\
& = \|m_k\|^2 - \a_{k-1} \l g_{k-1}, x_{k-1} - \hat x \r 
+ \beta_{k-1} \gamma_{k-1}.
\end{align*}
By using again the co-coercivity of $\nabla f$ we have 
$$
\l g_{k-1}, x_{k-1} - \hat x\r \ge f(x_{k-1}) - f_* + \frac{1}{2L} \|g_{k-1}\|^2
$$ 
and thus 
\begin{align}\label{conv.9}
\gamma_k & \le  \|m_k\|^2 - \a_{k-1} \left(f(x_{k-1}) - f_* + \frac{1}{2L} \|g_{k-1}\|^2\right) 
+ \beta_{k-1} \gamma_{k-1}.
\end{align} 
This motivates us to introduce $\{\tilde \gamma_k\}$ such that 
\begin{align}\label{conv.10}
\tilde \gamma_k = \left\{\begin{array}{lll} 
0 & \mbox{ if } k =0, \\
\|m_k\|^2 - \a_{k-1} \left(f(x_{k-1}) - f_* + \frac{1}{2L} \|g_{k-1}\|^2\right) 
+ \beta_{k-1} \tilde \gamma_{k-1} & \mbox{ if } k \ge 1
\end{array}\right.
\end{align}
once $x_l$ for $0\le l \le k$ and $\a_l, \beta_l$ for $0\le l \le k-1$ are defined.  Consequently, it 
leads us to propose Algorithm \ref{alg:AHB} below. 

\begin{algorithm}[AHB: Adaptive heavy ball method]\label{alg:AHB}
{\it 
Take an initial guess $x_0$, and set $x_{-1} = x_0$. Pick $\beta\in (0, \infty]$ and $0\le \mu_0< 1$. 
For $n\ge 0$ do the following: 
		
\begin{enumerate} 

\item[\emph{(i)}] Calculate $g_k := \nabla f(x_k)$ and determine $\a_k$ according to (\ref{conv.6}); 
			
\item[\emph{(ii)}] Calculate $m_k := x_k - x_{k-1}$ and determine $\tilde \gamma_k$ by the formula (\ref{conv.10}); 
			
\item[\emph{(iii)}] Calculate $\beta_k$ by 
\begin{align*}
\beta_k = \left\{\begin{array}{lll}
\min\left\{\max\left\{0, \frac{\a_k \l g_k, m_k\r - \tilde \gamma_k}{\|m_k\|^2}\right\}, \beta\right\}
& \emph{ if } m_k \ne 0,\\
0 & \emph{ if } m_k =0; 
\end{array}\right.
\end{align*}
			
\item[\emph{(iv)}] Update $x_{k+1}$ by $x_{k+1} = x_k - \a_k g_k + \beta_k m_k$.
\end{enumerate}
}
\end{algorithm}

Note that, for the implementation of one step of Algorithm \ref{alg:AHB}, the most expensive part is 
the calculation of $g_k$ which is common for the gradient method; the computational load for other 
parts relating to $\tilde \gamma_k$ and $\beta_k$ can be negligible. Therefore, the computational 
complexity per iteration of Algorithm \ref{alg:AHB} is marginally higher than, but very close to, 
that of one step of the gradient method. 

\begin{lemma}\label{AHB.lem1}
Assume $\nabla f$ satisfies (\ref{conv.3}) and consider Algorithm \ref{alg:AHB}.
Define $\gamma_k$ by (\ref{conv.8}) for $k\ge 0$. Then $\gamma_k \le \tilde \gamma_k$ 
for all $k \ge 0$.  
\end{lemma}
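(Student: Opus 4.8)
The plan is to argue by induction on $k$, exploiting the parallel structure between the recursive inequality (\ref{conv.9}) satisfied by $\gamma_k$ and the defining recursion (\ref{conv.10}) for $\tilde\gamma_k$. The only genuinely new observation needed is that the momentum coefficients produced by the algorithm are nonnegative, which keeps the induction hypothesis from being reversed when it is passed through the recursion.

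First I would settle the base case. Since the algorithm initializes $x_{-1} = x_0$, we have $m_0 = x_0 - x_{-1} = 0$, so $\gamma_0 = \l m_0, x_0 - \hat x\r = 0$, which matches $\tilde\gamma_0 = 0$ from (\ref{conv.10}). Hence $\gamma_0 \le \tilde\gamma_0$ with equality.

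For the inductive step, assume $\gamma_{k-1} \le \tilde\gamma_{k-1}$ for some $k \ge 1$. The key point is that step (iii) of Algorithm \ref{alg:AHB} defines $\beta_{k-1}$ as either $0$ or a quantity of the form $\min\{\max\{0, \cdot\}, \beta\}$, so in all cases $\beta_{k-1} \ge 0$. Multiplying the induction hypothesis by $\beta_{k-1}$ therefore preserves the inequality, giving $\beta_{k-1}\gamma_{k-1} \le \beta_{k-1}\tilde\gamma_{k-1}$. Combining this with the already-established bound (\ref{conv.9}),
\begin{align*}
\gamma_k
&\le \|m_k\|^2 - \a_{k-1}\left(f(x_{k-1}) - f_* + \frac{1}{2L}\|g_{k-1}\|^2\right) + \beta_{k-1}\gamma_{k-1} \\
&\le \|m_k\|^2 - \a_{k-1}\left(f(x_{k-1}) - f_* + \frac{1}{2L}\|g_{k-1}\|^2\right) + \beta_{k-1}\tilde\gamma_{k-1}
= \tilde\gamma_k,
\end{align*}
where the last equality is exactly the definition of $\tilde\gamma_k$ in (\ref{conv.10}). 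This closes the induction and yields $\gamma_k \le \tilde\gamma_k$ for all $k \ge 0$.

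There is no real obstacle here: the entire analytic content, namely the co-coercivity estimate leading to (\ref{conv.9}), has already been carried out before the statement of the lemma. The proof reduces to a routine monotone induction, and the one detail worth stating explicitly is the nonnegativity $\beta_{k-1} \ge 0$ guaranteed by the $\max\{0,\cdot\}$ clipping in the algorithm, since this is what allows the surrogate recursion for $\tilde\gamma_k$ to dominate the true recursion for $\gamma_k$ term by term.
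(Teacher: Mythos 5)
Your proof is correct and follows essentially the same route as the paper: induction on $k$, using the nonnegativity of the momentum coefficient (from the $\max\{0,\cdot\}$ clipping) together with the already-derived inequality (\ref{conv.9}) and the defining recursion (\ref{conv.10}). The only cosmetic difference is your index shift ($k-1 \to k$ versus the paper's $k \to k+1$) and your explicit verification that $\gamma_0 = 0$ via $m_0 = 0$, which the paper states without comment.
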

	
\begin{proof}
Since $\gamma_0 = \tilde \gamma_0 =0$, the result is true for $k =0$. Assume next that 
$\gamma_k \le \tilde \gamma_k$ for some $k \ge 0$. By noting that $\beta_k \ge 0$, 
we may use (\ref{conv.9}) and the definition of $\tilde \gamma_{k+1}$ to obtain 
\begin{align*}
\gamma_{k+1}
& \le \|m_{k+1}\|^2 - \a_k \left(f(x_k) - f_* + \frac{1}{2L} \|g_k\|^2\right) + \beta_k \gamma_k \\
& \le \|m_{k+1}\|^2 - \a_k\left(f(x_k) - f_* + \frac{1}{2L} \|g_k\|^2\right) + \beta_k \tilde \gamma_k \\
& = \tilde \gamma_{k+1}. 
\end{align*}
By the induction principle, this shows the result. 
\end{proof}
	
\begin{lemma}\label{AHB.lem2}
Assume $\nabla f$ satisfies (\ref{conv.3}) and consider Algorithm \ref{alg:AHB}.  Then 
\begin{align*}
\|x_{k+1} - \hat x\|^2 \le \|x_k - \hat x\|^2 - c_0 (f(x_k) - f_*) 
\end{align*}
for all $k \ge 0$, where $c_0 := 2(1-\mu_0^2)/L > 0$ and $\hat x$ denotes any minimizer of (\ref{conv.1}).
\end{lemma}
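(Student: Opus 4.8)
The plan is to begin from the one-step estimate (\ref{conv.7}), which was established using only the convexity and the $L$-Lipschitz continuity of $\nabla f$ together with the choice (\ref{conv.6}) of the step-size, and to show that the choice of $\beta_k$ prescribed by Algorithm \ref{alg:AHB} forces the $\beta_k$-dependent part of its right-hand side to be nonpositive. The only obstruction is the term $2\beta_k \langle m_k, x_k - \hat x\rangle = 2\beta_k\gamma_k$ appearing in (\ref{conv.7}), which involves the unknown minimizer $\hat x$ through the quantity $\gamma_k$ defined in (\ref{conv.8}). First I would dispose of this by invoking Lemma \ref{AHB.lem1}: since $\beta_k \ge 0$ by construction and $\gamma_k \le \tilde\gamma_k$, we have $2\beta_k\gamma_k \le 2\beta_k\tilde\gamma_k$, so that
\begin{align*}
\|x_{k+1} - \hat x\|^2 \le \|x_k - \hat x\|^2 - 2(1-\mu_0)\a_k(f(x_k)-f_*) + \phi(\beta_k),
\end{align*}
where $\phi(\beta) := \beta^2\|m_k\|^2 - 2\beta\left(\a_k\langle g_k, m_k\rangle - \tilde\gamma_k\right)$ collects all the remaining terms carrying the factor $\beta_k$.

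Next I would observe that $\phi$ is a convex quadratic in $\beta$, since its leading coefficient $\|m_k\|^2$ is nonnegative, and that $\phi(0) = 0$. When $m_k \ne 0$, the formula for $\beta_k$ in step (iii) is exactly the projection onto the interval $[0,\beta]$ of the unconstrained minimizer $(\a_k\langle g_k, m_k\rangle - \tilde\gamma_k)/\|m_k\|^2$ of $\phi$; hence $\beta_k$ is the minimizer of $\phi$ over $[0,\beta]$, and since $0 \in [0,\beta]$ this gives $\phi(\beta_k) \le \phi(0) = 0$. When $m_k = 0$ we have $\beta_k = 0$ and $\phi(\beta_k) = 0$ trivially. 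In either case $\phi(\beta_k) \le 0$, so the displayed estimate reduces to
\begin{align*}
\|x_{k+1} - \hat x\|^2 \le \|x_k - \hat x\|^2 - 2(1-\mu_0)\a_k(f(x_k) - f_*).
\end{align*}

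Finally I would substitute the value $\a_k = (1+\mu_0)/L$ from (\ref{conv.6}) to identify the constant, noting that $2(1-\mu_0)\a_k = 2(1-\mu_0)(1+\mu_0)/L = 2(1-\mu_0^2)/L = c_0$, which is positive because $0 \le \mu_0 < 1$; this yields the claimed inequality. The conceptual heart of the argument, and the step that required the preparatory work, is the replacement of the unknown $\gamma_k$ by the computable surrogate $\tilde\gamma_k$ via Lemma \ref{AHB.lem1}: this is precisely what makes the minimizing choice of $\beta_k$ implementable while still guaranteeing $\phi(\beta_k) \le \phi(0) = 0$. Everything else is a one-line convex-quadratic optimization over $[0,\beta]$ followed by a routine substitution.
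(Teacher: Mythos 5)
Your proof is correct and follows essentially the same route as the paper: starting from (\ref{conv.7}), bounding $\gamma_k$ by $\tilde\gamma_k$ via Lemma \ref{AHB.lem1}, and observing that $\beta_k$ minimizes the resulting convex quadratic over $[0,\beta]$ so that its value there is at most the value at $0$. The only cosmetic difference is that the paper keeps the constant term $-2(1-\mu_0)\a_k(f(x_k)-f_*)$ inside its quadratic $h_k(t)$, whereas you separate it out into $\phi(\beta)$ with $\phi(0)=0$; the substitution $2(1-\mu_0)\a_k = 2(1-\mu_0^2)/L = c_0$ is identical.
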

	
\begin{proof}
According to (\ref{conv.7}) and Lemma \ref{AHB.lem1} we have for any solution $\hat x$ of (\ref{conv.1}) 
that
\begin{align*}
&\|x_{k+1} - \hat x\|^2 - \|x_k - \hat x\|^2 \\
& \le - 2(1-\mu_0)\a_k (f(x_k) - f_*) + \beta_k^2 \|m_k\|^2 + 2 \beta_k \gamma_k - 2 \a_k \beta_k \l g_k, m_k\r \\
& \le - 2(1-\mu_0)\a_k (f(x_k) - f_*) + \beta_k^2 \|m_k\|^2 + 2 \beta_k \tilde \gamma_k - 2 \a_k \beta_k \l g_k, m_k\r.
\end{align*}
Note that $\beta_k$ is the minimizer of the function $t \to h_k(t)$ over $[0, \beta]$, where 
$$
h_k(t) := - 2(1-\mu_0)\a_k(f(x_k) - f_*) + t^2 \|m_k\|^2 + 2 t \tilde \gamma_k  - 2 \a_k t \l g_k, m_k\r,
$$
we can conclude that 
\begin{align*}
\|x_{k+1} - \hat x\|^2 - \|x_k - \hat x\|^2 \le h_k(\beta_k) \le h_k(0) = - 2(1-\mu_0)\a_k (f(x_k) - f_*)
\end{align*} 
which shows the desired result. 
\end{proof}

Based on Lemma \ref{AHB.lem2} we will investigate the convergence of Algorithm \ref{alg:AHB}. 
In order to derive convergence rates under H\"{o}lder growth error bound conditions, we need 
the following calculus lemma. 

\begin{lemma}\label{lem_cal}
Let $\{\Delta_k\}$ be a sequence of nonnegative numbers satisfying
\begin{align}\label{5.8.6}
\Delta_{k+1} \le \Delta_k - C \Delta_k^\theta, \quad \forall k \ge 0, 
\end{align}
where $C>0$ and $\theta>1$ are constants. Then there is a constant $\tilde C>0$ such that
$$
\Delta_k \le \tilde C (1+ k)^{-\frac{1}{\theta-1}}, \quad \forall k \ge 0.
$$
\end{lemma}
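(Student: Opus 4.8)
The plan is to prove the recursion $\Delta_{k+1}\le \Delta_k - C\Delta_k^\theta$ forces the decay rate $\Delta_k = O(k^{-1/(\theta-1)})$ by a standard ``divide and telescope'' argument applied to negative powers of $\Delta_k$. First I would dispose of the trivial cases: if some $\Delta_k = 0$ then monotonicity (which follows immediately from (\ref{5.8.6}) since $C\Delta_k^\theta \ge 0$) gives $\Delta_j = 0$ for all $j \ge k$ and the bound holds. So I may assume $\Delta_k > 0$ for all $k$. The sequence is nonincreasing and bounded below by $0$, hence convergent; combined with (\ref{5.8.6}) this forces $\Delta_k^\theta \to 0$, so $\Delta_k \to 0$ and in particular $\Delta_k$ is eventually small.

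The core step is to control the increments of $u_k := \Delta_k^{-(\theta-1)}$. I would estimate $u_{k+1} - u_k$ from below using (\ref{5.8.6}). Writing $\Delta_{k+1}\le \Delta_k(1 - C\Delta_k^{\theta-1})$ and raising both sides to the power $-(\theta-1)<0$ (which reverses the inequality), one gets
\begin{align*}
u_{k+1} = \Delta_{k+1}^{-(\theta-1)} \ge \Delta_k^{-(\theta-1)}\left(1 - C\Delta_k^{\theta-1}\right)^{-(\theta-1)} = u_k \left(1 - C\Delta_k^{\theta-1}\right)^{-(\theta-1)}.
\end{align*}
The elementary inequality $(1-s)^{-p}\ge 1 + ps$ valid for $p>0$ and $s\in[0,1)$, applied with $p = \theta-1$ and $s = C\Delta_k^{\theta-1}$, then yields
\begin{align*}
u_{k+1} \ge u_k\left(1 + (\theta-1)C\Delta_k^{\theta-1}\right) = u_k + (\theta-1)C\, u_k \Delta_k^{\theta-1} = u_k + (\theta-1)C,
\end{align*}
since $u_k \Delta_k^{\theta-1} = 1$. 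Thus each step increases $u_k$ by at least the fixed constant $(\theta-1)C$.

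Telescoping from $0$ to $k$ gives $u_k \ge u_0 + (\theta-1)C k \ge (\theta-1)C k$, hence $\Delta_k = u_k^{-1/(\theta-1)} \le \left((\theta-1)C k\right)^{-1/(\theta-1)}$ for $k\ge 1$; absorbing the $k=0$ term and rewriting $k$ as a multiple of $1+k$ produces a constant $\tilde C>0$ with $\Delta_k \le \tilde C(1+k)^{-1/(\theta-1)}$ for all $k$, as claimed.

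I expect the main obstacle to be bookkeeping rather than conceptual: the step $s = C\Delta_k^{\theta-1}$ must lie in $[0,1)$ for the inequality $(1-s)^{-p}\ge 1+ps$ and the sign of the exponentiation to be valid, which is only guaranteed once $\Delta_k$ is small enough. I would handle this by noting that $\Delta_k\to 0$, so there is an index $k_0$ beyond which $C\Delta_k^{\theta-1}<1$; I run the telescoping argument from $k_0$ onward to get the rate for large $k$, and then enlarge the constant $\tilde C$ to cover the finitely many indices $k< k_0$ (each $\Delta_k$ is a fixed finite number). This final adjustment of constants is the only delicate point and is entirely routine.
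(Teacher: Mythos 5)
Your argument is correct and complete. The paper itself does not prove this lemma; it simply cites \cite[\S 2.2, Lemma 6]{P1987}, and the proof you give is essentially the standard one found there: pass to $u_k=\Delta_k^{-(\theta-1)}$, use convexity of $s\mapsto(1-s)^{-(\theta-1)}$ to get the uniform increment $u_{k+1}\ge u_k+(\theta-1)C$, and telescope. All the individual steps check out: the inequality $(1-s)^{-p}\ge 1+ps$ for $p>0$, $s\in[0,1)$ is the tangent-line bound for a convex function, and the identity $u_k\Delta_k^{\theta-1}=1$ closes the estimate. One small remark: the detour through an index $k_0$ in your last paragraph is unnecessary. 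Once you have reduced to the case $\Delta_k>0$ for all $k$, the condition $C\Delta_k^{\theta-1}<1$ holds automatically for every $k$, since $0<\Delta_{k+1}\le\Delta_k\bigl(1-C\Delta_k^{\theta-1}\bigr)$ forces the factor in parentheses to be positive. So the telescoping can start at $k=0$ and the only constant adjustment needed is the routine replacement of $k$ by $(1+k)/2$ for $k\ge 1$ together with the separate $k=0$ term.
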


\begin{proof}
Please refer to \cite[\S2.2, Lemma 6]{P1987}. 
\end{proof}

\begin{theorem}
Consider the minimization problem (\ref{conv.1}) in which $f$ is a continuous differentiable convex
function satisfying (\ref{conv.3}). Assume that the set $S$ of all solution of (\ref{conv.1}) is 
nonempty and that the minimum value $f_*$ of $f$ is known. Let $\{x_k\}$ be the sequence determined 
by Algorithm \ref{alg:AHB}. Then there exists a solution $x^\dag$ of (\ref{conv.1}) such that 
\begin{align*}
\|x_k - x^\dag\| \to 0 \quad \mbox{ as } k \to \infty. 
\end{align*}
If, in addition, there exist constants $\kappa>0$ and $\a \ge 2$ such that 
\begin{align}\label{AHB.geb}
\kappa [d(x, S)]^\a \le f(x) - f_*, \quad \forall x \in B_R(x^\dag),
\end{align}
where $R$ is a number such that $\|x_k - x^\dag\| < R$ for all $k \ge 0$, then there holds the following 
convergence rate:
\begin{enumerate}[leftmargin = 0.9cm]
\item[\emph{(i)}] If $\a = 2$, then $\{x_k\}$ converges to $x^\dag$ linearly, i.e. 
\begin{align}\label{AHB.rate1}
\|x_{k+1} - x^\dag\|^2 \le \left(1 - \frac{(1-\mu_0^2)\kappa}{4L} \right) \|x_k - x^\dag\|^2, \quad 
\forall k \ge 0.
\end{align}

\item[\emph{(ii)}] If $\a >2$, then there exists a positive constant $C$ independent of $k$ such that 
\begin{align}\label{AHB.rate2}
\|x_k - x^\dag\| \le C (k+1)^{-\frac{1}{\a-2}}, \quad \forall k \ge 0. 
\end{align}
\end{enumerate}
\end{theorem}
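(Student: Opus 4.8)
The plan is to establish the three conclusions — convergence of $\{x_k\}$ to some solution, then the linear rate when $\a=2$, and finally the sublinear rate when $\a>2$ — building everything on the fundamental descent estimate from Lemma \ref{AHB.lem2}, namely $\|x_{k+1}-\hat x\|^2 \le \|x_k - \hat x\|^2 - c_0(f(x_k)-f_*)$ with $c_0 = 2(1-\mu_0^2)/L$. First I would note that taking $\hat x$ to be the projection of $x_k$ onto $S$ (or any fixed solution) makes $\{\|x_k - \hat x\|\}$ monotonically nonincreasing for each fixed $\hat x \in S$, so the sequence is \emph{Fej\'er monotone} with respect to $S$. Summing the descent inequality over $k$ gives $c_0 \sum_{k=0}^\infty (f(x_k)-f_*) \le \|x_0 - \hat x\|^2 < \infty$, hence $f(x_k)\to f_*$. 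Boundedness of $\{x_k\}$ (from Fej\'er monotonicity) yields a convergent subsequence whose limit $x^\dag$ satisfies $f(x^\dag)=f_*$ by continuity of $f$, so $x^\dag \in S$; the standard Fej\'er-monotonicity argument then upgrades subsequential convergence to full convergence $x_k \to x^\dag$, since $\|x_k - x^\dag\|$ is itself eventually monotone and has a subsequence tending to $0$.

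For the rate statements I would fix $\hat x = x^\dag$ and feed the growth error bound (\ref{AHB.geb}) into the descent estimate. Since $d(x_k,S)\le \|x_k - x^\dag\|$, the bound (\ref{AHB.geb}) gives $\kappa\,[d(x_k,S)]^\a \le f(x_k)-f_*$, but to relate this to $\|x_k - x^\dag\|^2$ I would instead want a lower bound on $f(x_k)-f_*$ in terms of $\|x_k - x^\dag\|$. The clean route is to observe that $\|x_k - x^\dag\|$ and $d(x_k,S)$ need not coincide, so I would reconsider: the correct reading is that one applies (\ref{AHB.geb}) with a carefully chosen $\hat x$. The natural choice is to let $\hat x = P_S(x_k)$, the nearest point in $S$, so that $\|x_k - \hat x\| = d(x_k,S)$. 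Applying Lemma \ref{AHB.lem2} with this $\hat x$ and then using $\|x_{k+1} - x^\dag\|^2 \le \|x_{k+1} - P_S(x_{k+1})\|^2$ is not immediate, so instead I expect to run the estimate with the single limit point $x^\dag$ and absorb the discrepancy; the cleanest version sets $\Delta_k := \|x_k - x^\dag\|^2$ and uses $d(x_k,S)^\a \le \|x_k - x^\dag\|^\a$ together with (\ref{AHB.geb}) applied at $\hat x = P_S(x_k)$ to produce $f(x_k)-f_* \ge \kappa\, d(x_k,S)^\a$, while Lemma \ref{AHB.lem2} with $\hat x = P_S(x_k)\in B_R(x^\dag)$ gives $d(x_{k+1},S)^2 \le \|x_{k+1}-P_S(x_k)\|^2 \le d(x_k,S)^2 - c_0\kappa\, d(x_k,S)^\a$.

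With $D_k := d(x_k,S)^2$ this reads $D_{k+1} \le D_k - c_0\kappa\, D_k^{\a/2}$. When $\a = 2$ this is $D_{k+1} \le (1 - c_0\kappa)D_k$, and since $c_0\kappa = 2(1-\mu_0^2)\kappa/L$ one checks the contraction factor matches (\ref{AHB.rate1}) up to identifying $\|x_k - x^\dag\|$ with $d(x_k,S)$ along the convergent sequence; here I would argue that because $x_k \to x^\dag \in S$, eventually $P_S(x_k)$ can be taken close to $x^\dag$ so that the geometric decay transfers to $\|x_k - x^\dag\|^2$ with the stated rate $1 - (1-\mu_0^2)\kappa/(4L)$, the extra factor of $4$ accounting for the passage between the distance to $S$ and the distance to the specific limit. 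When $\a > 2$, the recursion $D_{k+1} \le D_k - C D_k^\theta$ with $\theta = \a/2 > 1$ and $C = c_0\kappa$ is exactly the hypothesis of Lemma \ref{lem_cal}, which yields $D_k \le \tilde C(1+k)^{-1/(\theta-1)} = \tilde C(1+k)^{-2/(\a-2)}$; taking square roots gives $d(x_k,S) \le C(k+1)^{-1/(\a-2)}$, and transferring this to $\|x_k - x^\dag\|$ produces (\ref{AHB.rate2}).

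The main obstacle I anticipate is the bookkeeping between $d(x_k,S)$ and $\|x_k - x^\dag\|$: the descent lemma is stated for an arbitrary fixed minimizer $\hat x$, the growth bound is most naturally applied at the projection $P_S(x_k)$, but the final rates are claimed for the distance to the \emph{single} limit $x^\dag$. Reconciling these requires either the Fej\'er-monotonicity machinery to show $\|x_k - x^\dag\|$ and $d(x_k,S)$ are comparable along the tail, or a direct argument that the iterates enter a neighborhood on which $x^\dag$ is itself the nearest solution. I expect the cleanest path is to run the entire rate analysis with $\Delta_k = \|x_k - x^\dag\|^2$ directly, applying (\ref{AHB.geb}) at $\hat x = x^\dag$ via $d(x_k,S)\le\|x_k-x^\dag\|$ to lower-bound $f(x_k)-f_*$, and accepting the resulting constant — which is precisely where the factor $1-\mu_0^2$ over $4L$ in (\ref{AHB.rate1}) originates.
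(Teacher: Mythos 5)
Your convergence argument (Fej\'er monotonicity, summability of $f(x_k)-f_*$, subsequential limit upgraded to full convergence) matches the paper's. For the rates, however, you have correctly identified the crux --- relating $d(x_k,S)$ to $\|x_k-x^\dag\|$ --- but you never actually resolve it, and the concrete route you commit to at the end is backwards. The growth bound (\ref{AHB.geb}) gives $f(x_k)-f_*\ge \kappa\,[d(x_k,S)]^\a$; to turn the descent inequality $\|x_{k+1}-x^\dag\|^2\le\|x_k-x^\dag\|^2-c_0\kappa\,[d(x_k,S)]^\a$ into a self-contained recursion for $\Delta_k=\|x_k-x^\dag\|^2$ you need a \emph{lower} bound $d(x_k,S)\ge c\,\|x_k-x^\dag\|$. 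Your proposed ``cleanest path'' invokes $d(x_k,S)\le\|x_k-x^\dag\|$, which points the wrong way and yields nothing; and your alternative for $\a=2$ (``eventually $P_S(x_k)$ is close to $x^\dag$'') is not an argument and in any case cannot give the rate for all $k\ge 0$ as claimed in (\ref{AHB.rate1}).

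The missing step is short and comes precisely from the Fej\'er monotonicity you already established: for any $x\in S$, Lemma \ref{AHB.lem2} with $\hat x=x$ makes $\{\|x_l-x\|\}$ nonincreasing, so $\|x^\dag-x\|=\lim_{l\to\infty}\|x_l-x\|\le\|x_k-x\|$, whence
\begin{align*}
\|x_k-x^\dag\|\le\|x_k-x\|+\|x-x^\dag\|\le 2\|x_k-x\|,
\end{align*}
and taking the infimum over $x\in S$ gives $\|x_k-x^\dag\|\le 2\,d(x_k,S)$ for every $k$. Substituting $[d(x_k,S)]^\a\ge 2^{-\a}\|x_k-x^\dag\|^\a$ into the descent inequality gives $\Delta_{k+1}\le\Delta_k-2^{-\a}c_0\kappa\,\Delta_k^{\a/2}$, from which (\ref{AHB.rate1}) is immediate for $\a=2$ and (\ref{AHB.rate2}) follows from Lemma \ref{lem_cal} with $\theta=\a/2$ for $\a>2$ --- this is exactly the paper's route, and it also explains the factor $2^{-\a}=1/4$ in the contraction constant, which you attributed to the right phenomenon but did not derive. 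Your parallel recursion $D_{k+1}\le D_k-c_0\kappa D_k^{\a/2}$ for $D_k=d(x_k,S)^2$ is valid and would also work, but it still requires the same two-sided comparison to convert a rate on $d(x_k,S)$ into the claimed rate on $\|x_k-x^\dag\|$, so it does not avoid the gap.
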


\begin{proof}
The convergence can be proved by a standard argument. According to Lemma \ref{AHB.lem2}, for any 
solution $\hat x$ of (\ref{conv.1}) the sequence $\{\|x_k - \hat x\|\}$ is monotonically decreasing 
and $f(x_k) - f_*\to 0$ as $k \to \infty$. Thus $\{x_k\}$ is bounded and hence it has a subsequence 
$\{x_{k_l}\}$ converging to a point $x^\dag$. By the continuity of $f$ we then have 
$$
f(x^\dag) = \lim_{l \to \infty} f(x_{k_l}) = f_*
$$
which implies that $x^\dag$ is a solution of (\ref{conv.1}). Thus $\{\|x_k - x^\dag\|\}$ is monotonically 
decreasing and $\|x_{k_l} - x^\dag\| \to 0$ as $l \to \infty$. Consequently $\|x_k - x^\dag\| \to 0$ as 
$k \to \infty$. 

Next we derive the convergence rates. By using Lemma \ref{AHB.lem2} and (\ref{AHB.geb}) we can obtain 
\begin{align*}
\|x_{k+1} - x^\dag\|^2 \le \|x_k - x^\dag\|^2 - c_0 \kappa [d(x_k, S)]^\a.
\end{align*}
Let $x\in S$ be any element. Since $\{\|x_k - x\|\}$ is monotonically decreasing, we have 
\begin{align*}
\|x_k - x^\dag\| \le \|x_k - x\| + \|x^\dag - x\| 
= \|x_k - x\| + \lim_{l \to \infty} \|x_l - x\| \le 2 \|x_k - x\|.
\end{align*}
Since $x \in S$ is arbitrary, this implies that $\|x_k - x^\dag\| \le 2 d(x_k, S)$. Therefore 
\begin{align*}
\|x_{k+1} - x^\dag\|^2 \le \|x_k - x^\dag\|^2 - 2^{-\a} c_0 \kappa \|x_k - x^\dag\|^\a.
\end{align*}
If $\a = 2$, we immediately obtain (\ref{AHB.rate1}). If $\a >2$, then we may use Lemma \ref{lem_cal}
to obtain (\ref{AHB.rate2}). 
\end{proof}

We conclude this section by providing a numerical experiment to test the performance of Algorithm 
\ref{alg:AHB}. We will compare our method with the following methods: 

\begin{enumerate}[leftmargin = 0.9cm]
\item[$\bullet$] Gradient method with constant step-size, i.e. 
$$
x_{k+1} = x_k - \a \nabla f(x_k)
$$
with $\a = \mu/L$ for some $\mu \in (0, 2)$. 

\item[$\bullet$] ALR-HB (heavy-ball method with adaptive learning rate) in \cite{WJZ2023}. This is the method 
(\ref{conv.2}) with $\beta_k \equiv \beta$ for some $\beta \in (0,1)$ and 
$$
\a_k = \frac{1}{2L} + \frac{f(x_k) - f_*}{\|g_k\|^2} + \beta \frac{\l g_k, x_k - x_{k-1}\r}{\|g_k\|^2}. 
$$
The nice numerical performance of ALR-HB has been demonstrated in \cite{WJZ2023}, the theoretical 
convergence guarantee however is not yet available. 

\item[$\bullet$] Nesterov's accelerated gradient method (\cite{AP2016,N1983}). This method takes the form 
\begin{align*}
z_k = x_k + \frac{k-1}{k+\nu} (x_k - x_{k-1}), \quad 
x_{k+1} = z_k - \a \nabla f(z_k)
\end{align*}
with $0< \a \le 1/L$ and $\nu \ge 2$. 
\end{enumerate}

\begin{example}
We consider applying Algorithm \ref{alg:AHB} to the standard 2D fan-beam tomography which consists in 
determining the density of cross sections of a human body by measuring the attenuation of X-rays as they 
propagate through the biological tissues. This imaging modality can be mathematically expressed as finding 
a compactly supported function from its line integrals, the so-called Radon transform (\cite{N2001}). We 
discretize the sought image on a $256\times 256$ pixel grid and identify it by a long vector in 
${\mathbb R}^N$ with $N = 256\times 256 = 65536$ by stacking all its columns. We consider the reconstruction 
from the tomographic data with $p=180$ projections and 367 X-ray lines per projection. By using the function 
\texttt{fanbeamtomo} from the MATLAB package AIR TOOLS \cite{HS2012} to discretize the problem, it leads to 
an ill-conditioned linear algebraic system $Ax=y$, where $A$ is a coefficient matrix with the size 
$66060\times 65536$. The true image $x^\dag$ in our experiment is the Shepp-Logan phantom. We calculate 
$y := Ax^\dag$ and then use it to reconstruct $x^\dag$ by solving the least square problem 
\begin{align*}
\min_{x \in \RR^N} \left\{f(x):= \frac{1}{2} \|A x - y\|^2\right\}.
\end{align*}
Clearly $f$ is a convex continuous differentiable function satisfying (\ref{conv.2}) with $L = \|A\|^2$
and its minimum value is $f_*=0$. In figure \ref{fig1} (a) we report the relative error $\|x_k - x^\dag\|/\|x^\dag\|$
of the computational results versus the iteration number $k$ for the above methods, all with the same 
initial guess $x_0 = 0$, where ``\texttt{AHB}" stands for our Algorithm \ref{alg:AHB} with $\beta = 1$ 
and $\mu_0 = 0.96$, ``\texttt{ALR-HB}" represents the ALR-HB method with $\beta = 0.96$, ``\texttt{Nesterov}"
stands for the Nesterov's accelerated gradient method with $\a = 1/L$ and $\nu = 3$, and ``\texttt{Gradient}"
stands for the gradient method with the constant step-size $\a = 1.96/L$. The plot demonstrates the obvious 
acceleration effect of ALH-HB, Nesterov, and our AHB over the gradient method. Furthermore, our AHB method 
produces results comparable to those obtained by ALR-HB and Nesterov, with even better accuracy as the 
iterations proceed. In Figure \ref{fig1} (b) we plot the $\beta_n$ values obtained by our AHB method, showing 
that our method consistently produces positive momentum coefficients, which promote the acceleration effect.  
\end{example}

\begin{figure}[htb!]
    \centering
\subfigure{
    \includegraphics[scale=0.47]{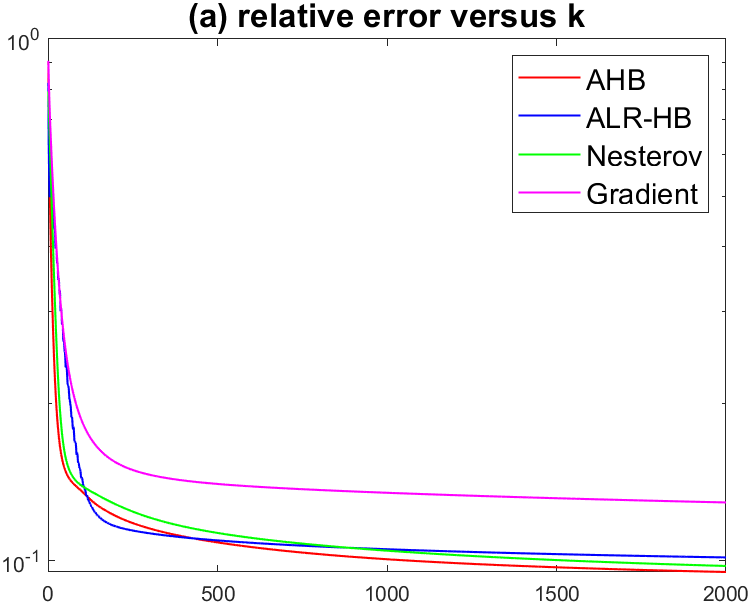}
}
\subfigure{
    \includegraphics[scale=0.47]{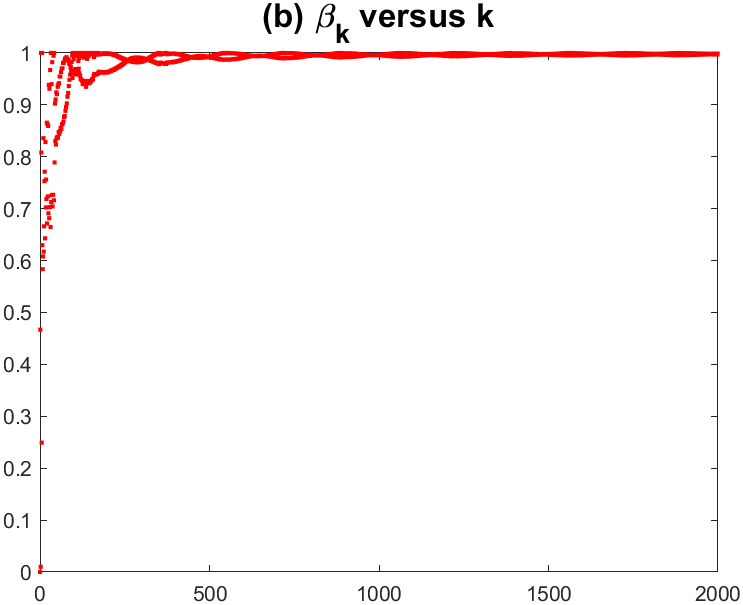}
}
\caption{}    
    \label{fig1}
\end{figure}

\medskip 
\noindent
{\bf Data Availability} Enquiries about data availability should be directed to the author.

\section*{\bf Declarations}
\noindent
{\bf Conflict of interest} The author declares that they have no conflict of interest.


\begin{thebibliography}{10}

\bibitem{AAFV2019} M. Ahookhosh, F. J. Arag\'{o}n Artacho, R. M. T. Fleming 
and P. T. Vuong, 
{\it Local convergence of the Levenberg-Marquardt method under H\"{o}lder metric subregularity}, 
Adv. Comput. Math.,  45 (2019), 2771--2806. 


\bibitem{AG2014} F. J. Arag\'{o}n Artacho and M. H. Geoffroy, 
{\it Metric subregularity of the 
convex subdifferential in Banach spaces}, J. Nonlinear Convex Anal., 15 (2014),

\bibitem{AB2009} H. Attouch and  J. Bolte,
\textit{On the convergence of the proximal algorithm for nonsmooth functions involving analytic features},
Math. Program., Ser. B, 116 (2009), 5--16.

\bibitem{AP2016} H. Attouch and J. Peypouquet, {\it The rate of convergence of Nesterov's accelerated 
forward-backward method is actually faster than $o(1/k^2)$}, SIAM J. Optim., 26 (2016), no. 3, 1824--1834.

\bibitem{ABS2013}
H. Attouch,  J. Bolte, B. F. Svaiter, 
{\it Convergence of descent methods for semi-algebraic and tame problems: proximal algorithms, 
forward-backward splitting, and regularized Gauss-Seidel methods},
Math. Program., 137 (2013), 91--129.

\bibitem{BC2011} H. H. Bauschke and P. L. Combettes, {\it Convex Analysis and
Monotone Operator Theory in Hilbert Spaces}, Springer, 2011.

\bibitem{BDL2006} J. Bolte, A. Daniilidis, and A. Lewis, 
{\it The Lojasiewicz inequality 
for nonsmooth subanalytic functions with applications to subgradient dynamical systems}, 
SIAM J. Optim., 17 (2006), 1205--1223.

\bibitem{BDLS2007}
J. Bolte, A. Daniilidis, A. Lewis, and M. Shiota, 
{\it Clarke Subgradients of Stratifiable Functions},
SIAM J. Optim., 18 (2007), no. 2, 556--572.

\bibitem{BNPS2017} J. Bolte, T. P. Nguyen, J. Peypouquet and B. W. Suter, 
{\it From error bounds to the complexity of first-order descent methods for convex functions}, 
Math. Program., Ser. A, 165 (2017), 471--507.

\bibitem{C1999} M. Coste, {\it An Introduction to O-minimal Geometry}, 
RAAG Notes, Institut de Recherche Math\'{e}matiques de Rennes, 81 pp., 1999.

\bibitem{C2002}
M. Coste, {\it An Introduction to Semialgebraic Geometry}, 
preprint, 2002. Available online from
http://perso.univ-rennes1.fr/michel.coste/polyens/SAG.pdf


\bibitem{DM1996} L. van den Dries and C. Miller, {\it Geometric categories and o-minimal 
structures}, Duke Math. J., 84 (1996), no. 2, 497--540.

\bibitem{DL2018} D. Drusvyatskiy and A. S. Lewis, {\it Error bounds, quadratic growth, 
and linear convergence of proximal methods}, Math. Oper. Res., 43 (2018), no. 3, 919--948.

\bibitem{GFJ2015} E. Ghadimi, H. Feyzmahdavian and M. Johansson, {\it Global convergence of the
heavy-ball method for convex optimization}, 2015 European Control Conference, pages
310--315, 2015.

\bibitem{HS2012} P. C. Hansen and M. Saxild-Hansen, {\it AIR tools—a MATLAB package of algebraic 
iterative reconstruction methods}, J. Comput. Appl. Math., 236 (2012), pp. 2167--2178.


\bibitem{K1998}
K. Kurdyka, {\it On gradients of functions definable in o-minimal structures}, 
Annales de l’institut Fourier, 48 (1998), no. 3,  769--783.

\bibitem{LP2022} J. H. Lee and T. S. Pham, {\it Openness, H\"{o}lder metric regularity, and H\"{o}lder 
continuity properties of semialgebraic set-valued maps}, SIAM J. Optim., 32 (2022), no. 1, pp. 56--74.

\bibitem{Li2013} G.  Li, {\it Global error bounds for piecewise convex polynomials}, Math. Program., 
137 (2013), no. 1--2, Ser. A, 37--64.

\bibitem{LP2018} G. Li and T. K. Pong, {\it Calculus of the exponent of 
Kurdyka–Lojasiewicz inequality and its applications to linear convergence 
of first-order methods}, Found. Comput. Math., 18 (2018), 1199--1232.


\bibitem{L1963} S. Lojasiewicz, {\it Une propri\'{e}t\'{e} topologique des 
sous-ensembles analytiques re\'{e}ls}, in: Les Equations aux D\'{e}rive\'{e}s 
Partielles, Editions du centre National ´
de la Recherche Scientifique, Paris, 1963.

\bibitem{L1965}
S. Lojasiewicz, {\it Ensembles Semi-analytiques}, lecture notes, Institut des Hautes Etudes 
Scientifiques Buressur-Yvette (Seine-et-Oise), France, 1965.

\bibitem{LT1993} Z. Q. Luo and P. Tseng, {\it Error bounds and convergence analysis of feasible descent
methods: a general approach}, Ann. Oper. Res., 46–47 (1993), no. 1, 157--178.

\bibitem{N2001} F. Natterer, {\it The Mathematics of Computerized Tomography}, SIAM, Philadelphia, 2001.

\bibitem{NNG2019} I. Necoara, Yu. Nesterov and F. Glineur, {\it Linear convergence of first 
order methods for non-strongly convex optimization}, Math. Program., 175 (2019), 69--107. 

\bibitem{N1983} Y. Nesterov,{\it  A method of solving a convex programming problem with convergence rate
$O(1/k^2)$}, Soviet Mathematics Doklady, 27 (1983), 372--376.

\bibitem{N2018} Y. Nesterov, {\it Lectures on Convex Optimization}, Springer Science \& Business Media,
New York, 2018.

\bibitem{O2018} P. Ochs, {\it Local convergence of the heavy-ball method and iPiano for non-convex optimization},  J. Optim. Theory Appl., 177 (2018), no. 1, 153--180.

\bibitem{OCBP2014} P. Ochs, Y. Chen, T. Brox and T. Pock, {\it iPiano: inertial proximal algorithm 
for nonconvex optimization}, SIAM J. Imaging Sci. 7 (2014), no. 2, 1388--1419.

\bibitem{P1963} B. T. Polyak, {\it Gradient methods for minimizing functionals} (in Russian), 
Zh. Vychisl. Mat. Mat. Fiz., 643--653, 1963.

\bibitem{P1964} B. T. Polyak, {\it Some methods of speeding up the convergence of iteration methods}, 
USSR Computational Mathematics and Mathematical Physics, 4 (1964), no. 5, 1--17.

\bibitem{P1987} B. T. Polyak, {\it Introduction to optimization}, Translations series in mathematics and
engineering, Optimization Software, 1987.



\bibitem{W2004} C. T. C. Wall, {\it Singular points of plane curves},  London Mathematical Society Student Texts, 63. 
Cambridge University Press, Cambridge, 2004.

\bibitem{WJZ2023} X. Wang, M. Johansson, and T. Zhang, {\it Generalized Polyak step size for 
first order optimization with momentum}, Proceedings of the 40th International Conference on 
Machine Learning, PMLR 202: 35836--35863, 2023.

\bibitem{YLP2022} P. Yu, G. Li and T. K. Pong, {\it Kurdyka-{\L}ojasiewicz Exponent via Inf-projection},
Found. Comput. Math.,  22 (2022), 1171--1217.

\bibitem{ZK1993} S. K. Zavriev and F. V. Kostyuk, {\it Heavy-ball method in nonconvex optimization 
problems}, Comput. Math. Model., 4 (1993), pp. 336--341.
\end{thebibliography}
\end{document}